\newcommand{\N}{\mathbb{N}}
\newcommand{\Z}{\mathbb{Z}}
\newcommand{\C}{\mathbb{C}}
\newcommand{\E}{\mathbb{E}}
\newcommand{\Mn}{M_n}
\newcommand{\M}{\mathcal{M}}
\newcommand{\im}{\text{Im}}
\newcommand{\F}{\mathcal{F}}
\newcommand{\I}{\mathcal{I}}
\newcommand{\tr}{\text{tr}}
\newtheorem{theorem}{Theorem}[section]
\newtheorem{definition}[theorem]{Definition}
\newtheorem{rmq}[theorem]{Remark}
\newtheorem{lemme}[theorem]{Lemma}
\newtheorem{corollaire}[theorem]{Corollary}
\begin{document}

\title[Absolutely dilatable Schur multipliers]
{Unital positive Schur multipliers on $S_n^p$ with a completely isometric dilation}

\author[C. Duquet]{Charles Duquet}
\email{charles.duquet@univ-fcomte.fr}
\address{Laboratoire de Math\'ematiques de Besan\c con,  Universit\'e de Franche-Comt\'e,
16 Route de Gray, 25000 Besan\c{c}on Cedex, FRANCE}

\date{\today}

\maketitle

\begin{Large}Abstract:\end{Large} Let $1<p\not=2<\infty$ and let $S^p_n$ be the 
associated Schatten von Neumann class over $n\times n$ matrices. We prove 
new characterizations of unital positive Schur multipliers $S^p_n\to S^p_n$ 
which can be dilated into an invertible complete isometry acting on a non-commutative
$L^p$-space. Then we investigate the infinite dimensional case.

\vskip 0.8cm
\noindent
{\it 2000 Mathematics Subject Classification:} Primary 47A20, secondary 47B65.

\smallskip
\noindent
{\it Key words:} Dilation; Schur multipliers.

\bigskip

\section{Introduction}

A famous theorem of Akcoglu \cite{Akcoglu} asserts that positive contractions on classical $L^p$-spaces, $1<p<\infty$, admit an isometric dilation, as follows:
for any measure space $(\Sigma,\mu)$, any $1<p<\infty$ and  
any positive contraction $T:L^p(\Sigma)\to L^p(\Sigma)$, 
there exist a measure space $(\Sigma',\mu')$, 
two contractions $J:L^p(\Sigma)\to L^p(\Sigma')$ and 
$Q:L^p(\Sigma')\to L^p(\Sigma)$ and an invertible isometry $U:L^p(\Sigma')\to L^p(\Sigma')$ such that  $T^k=QU^kJ$ for all integer $k\geq 0$.

The paper deals with similar dilation properties in the framework of noncommutative $L^p$-spaces
associated with semi-finite von Neumann algebras.
More specifically, we will be interested in dilation properties of Schur multipliers. Before presenting our results, we need a few preliminaries and some background.

 In the sequel we call tracial von Neumann algebra any pair $(N,\tau)$, 
 where $N$ is a (semi-finite) von Neumann algebra and $\tau$ is a normal semi-finite faithful trace (n.s.f trace in short) on $N$. 
 Assume that 
 $N$ is acting on some Hilbert space $H$.
 We let $L_0(N)$ denote the $*$-algebra of all $\tau$-measurable operators acting on $H$.
Any $x\in L_0(N)$ admits a 
(unique) polar decomposition $x=u|x|$. 
For $1\leq p<+\infty$, the noncommutative $L^p$-space associated with
$(N,\tau)$ is defined as 
$$
L^p(N):=\lbrace x\in L_0(N)\text{ : } \tau (|x|^p)<+\infty\rbrace.
$$
This is a Banach space for the norm 
$\Vert x\Vert_p=\bigl(\tau (|x|^p)\bigr)^{\frac{1}{p}}$.
We further set $L^\infty(N)=N$.

Let $1\leq p<\infty$ and $1<q\leq\infty$ such that 
$\frac{1}{p}+\frac{1}{q}=1$.
For any $x\in L^p(N)$ and $y\in L^q(N)$, we have
$$
xy\in L^1(N) \text{ and } |\tau(xy)|\leq \|x\|_p\|y\|_q.
$$
Furthermore the duality bracket $(x,y)\mapsto \tau(xy)$ between $L^p(N)$ and $L^q(N)$ yields an isometric identification
$L^p(N)^*=L^q(N)$. 
We refer to \cite[Section 4]{Hiai} for details.

Let $I$ be an index set. If $N=B(l^2(I))$ is equipped with the
usual trace, then the space $L^p(N)$ identifies with the Schatten von Neumann class $S_I^p$ for all $1\leq p<\infty$. 

For any $n\geq 1$ and any
$1\leq p\leq \infty$, we identify $L^p(M_n(N))$ 
with $S^p_n\otimes L^p(N)$ in the usual way. 
Let $T: L^p(M)\to L^p(N)$ be a linear map, where $M$ and $N$ are both tracial von Neumann algebras. For any $n\geq 1$, let $T_n: L^p(M_n(M))\to L^p(M_n(N))$ be defined by
$$
T_n([a_{ij}])=[T(a_{ij})],\qquad  [a_{ij}]\in L^p(M_n(M)).
$$
We say that $T$ is a complete contraction if for all $n\geq 1$, $\|T_n\|_{L^p(M_n(M))\to L^p(M_n(N))}\leq 1$. If $T^*:L^q(N)\to L^q(M)$ denotes the adjoint of $T$ and if $T$ is a complete contraction, then $T^*$ is a complete contraction. 
This follows from the fact that for any $n\geq 1$, $(T^*)_n=(T_n)^*$.  In the same way, 
$T$ is called a complete isometry if for all $n\geq 1$, $T_n$ is an isometry. We further say that $T$ is completely positive if for all $n\geq 1$, $T_n$ is positive.

\begin{definition}\label{def comp p dilatable}
Let $(N,\tau)$ be a tracial von Neumann algebra and let $1\leq p<\infty$.
We say that an operator $T:L^p(N,\tau)\to L^p(N,\tau)$ is completely $p$-dilatable if there exist a tracial von Neumann algebra $(N',\tau')$, two complete contractions $J:L^p(N,\tau)\to L^p(N',\tau')$
and $Q: L^p(N',\tau')\to L^p(N,\tau)$, and an invertible complete isometry $U:L^p(N',\tau')\to L^p(N',\tau')$, such that for all $k\geq 0$, $T^k=QU^kJ$.
\begin{align*}
\begin{array}{ccc}
L^p(N',\tau')&\overset{U^k}{\longrightarrow}&L^p(N',\tau')\\
J\uparrow& & \downarrow Q\\
L^p(N,\tau)&\overset{T^k}{\longrightarrow}&L^p(N,\tau)
\end{array}
\end{align*}
\end{definition}

We say that a positive operator 
$T: (N,\tau)\to (N',\tau')$ between two tracial von Neumann algebras is trace preserving if for all $x\in N_+$, we have $\tau'(T(x))=\tau(x)$. The following is well-known (see e.g. \cite[Lemma 1.1]{JX}).

\begin{lemme}\label{lem superdilatation}
Let $(N,\tau), (N',\tau')$ be two tracial von Neumann algebras
and let $T:N\to N'$ be a positive trace preserving complete contraction. 
Then for all $1\leq p<\infty$, there exists a necessarily unique complete contraction 
$T_p: L^p(N,\tau)\to L^p(N',\tau')$ such that for all $x\in N\cap L^p(N,\tau)$, $T(x)=T_p(x)$.

If further $T$ is a one-to-one $\star$-homomorphism, then $T_p$ is 
a complete isometry.
\end{lemme}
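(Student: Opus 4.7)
The plan is to build the $L^p$-extension by interpolation. First I would extend $T$ to a contraction on $L^1(N)$, then apply noncommutative complex interpolation between $N=L^\infty(N)$ and $L^1(N)$ to define $T_p$ for each $1<p<\infty$, and finally amplify to $M_n$ to promote boundedness to complete boundedness. Uniqueness of $T_p$ will be automatic since $N\cap L^p(N)$ is dense in $L^p(N)$ for every $1\le p<\infty$.

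The crux is the $L^1$-extension. For $x\in N_+\cap L^1(N)$, positivity and trace preservation immediately give $T(x)\in L^1(N')_+$ with $\|T(x)\|_1=\tau'(T(x))=\tau(x)=\|x\|_1$. For self-adjoint $x\in N\cap L^1(N)$, the Jordan decomposition $x=x_+-x_-$ yields $\|T(x)\|_1\le\|T(x_+)\|_1+\|T(x_-)\|_1=\|x\|_1$. To reach arbitrary $x$ with the sharp constant~$1$, I would introduce the trace-adjoint $T^{\dagger}:N'\to N$ defined by $\tau(T^{\dagger}(y)x)=\tau'(yT(x))$ for $x\in N\cap L^1(N)$; a Radon--Nikodym-type argument applied to the $\tau$-dominated positive normal functional $x\mapsto\tau'(yT(x))$ (for $y\in N'_+$) produces $T^{\dagger}(y)\in N_+$ with $\|T^{\dagger}(y)\|\le\|y\|$, and after linear decomposition $T^{\dagger}$ is contractive on $N'$. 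Dualizing yields $\|T(x)\|_1\le\|x\|_1$ on $N\cap L^1(N)$, which extends by density to a contraction $T^{(1)}:L^1(N)\to L^1(N')$.

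With contractivity of $T$ at the endpoints $p=\infty$ and $p=1$, noncommutative complex interpolation, via $L^p(N)=[L^{\infty}(N),L^1(N)]_{1/p}$, produces a contraction $T_p:L^p(N)\to L^p(N')$ coinciding with $T$ on the dense subspace $N\cap L^p(N)$. For complete contractivity, I would rerun the whole construction on the amplifications $T_n=\mathrm{id}_{M_n}\otimes T:M_n(N)\to M_n(N')$ with the product trace $\tr_n\otimes\tau$: complete contractivity of $T$ yields $L^{\infty}$-contractivity of each $T_n$, trace preservation lifts automatically, and (reading positivity in its completely positive form, as is standard in the applications referenced in \cite{JX}) the $L^1$-step goes through, so interpolation gives $(T_p)_n=\mathrm{id}_{M_n}\otimes T_p$ contractive for every $n$.

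For the isometry statement, I would use that an injective $\star$-homomorphism respects continuous functional calculus on positive elements: for $x\in N\cap L^p(N)$, $|T(x)|^p=(T(x)^*T(x))^{p/2}=T(x^*x)^{p/2}=T(|x|^p)$. Trace preservation then gives $\|T(x)\|_p^p=\tau'(T(|x|^p))=\tau(|x|^p)=\|x\|_p^p$, so $T_p$ is isometric on a dense subspace and hence on all of $L^p(N)$. Applying this to $T_n$, still an injective $\star$-homomorphism from $M_n(N)$ to $M_n(N')$, yields the complete isometry. The main delicacy in the argument lies in the sharp $L^1$-contraction bound; without the Radon--Nikodym detour through $T^{\dagger}$, one only gets boundedness with a non-sharp constant coming from the real/imaginary decomposition.
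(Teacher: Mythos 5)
The paper does not actually prove this lemma; it is quoted from \cite[Lemma 1.1]{JX}, and your argument is essentially the standard proof behind that citation: the sharp $L^1$-bound obtained by dualizing against the unital positive trace-adjoint $T^{\dagger}$ (Radon--Nikodym plus Russo--Dye), complex interpolation against the $L^\infty$-endpoint, matrix amplification for the complete bounds, and functional calculus for the isometric case. The endpoint computations, the uniqueness-by-density remark, and the identity $|T(x)|^p=T(|x|^p)$ for an injective $\star$-homomorphism are all correct.

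The one point you should not leave as a parenthetical is the positivity of the amplifications. To run your $L^1$-argument at level $n$ you need $\mathrm{id}_{M_n}\otimes T$ to be positive, i.e. $T$ to be completely positive, and this is not a cosmetic rereading of the hypothesis: by trace duality one has
$\|T_n\|_{L^1(M_n(N))\to L^1(M_n(N'))}=\|(T^{\dagger})_n\|_{M_n(N')\to M_n(N)}$,
and since $T^{\dagger}$ is unital, complete contractivity of $T$ on $L^1$ actually \emph{forces} $T^{\dagger}$, hence $T$, to be completely positive. So mere positivity of $T$ together with complete contractivity on $N$ does not obviously deliver the conclusion, and your proof genuinely uses complete positivity as an extra hypothesis. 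This is harmless for every use of the lemma in this paper --- unital positive Schur multipliers are completely positive by Theorem \ref{th lien matrice et multiplicateur}, and the maps $J$, $U$, $\mathbb{E}$ arising in Definitions \ref{def comp p dilatable} and \ref{rmq superdilatation} are $\star$-homomorphisms or conditional expectations, hence completely positive --- but it should be stated explicitly rather than smuggled in, since as written the lemma only assumes positivity.
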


Assume that $J: (N,\tau)\to (N',\tau')$ is a unital one-to-one
trace preserving and $w^*$-continuous $\star$-homomorphism. Let $J_1 :L^1(N,\tau)\to L^1(N',\tau')$
be induced by $J$, according to Lemma \ref{lem superdilatation}. It
is well-known that $J_1^* :
N'\to N$ is a conditional expectation (if we regard $N$ as a von Neumann subalgebra of $N'$, using $J$). In the sequel, $J_1^*$ is called the conditional expectation associated with $J$.

\begin{definition} \label{rmq superdilatation}
We say that an operator
$T:(N,\tau)\to (N,\tau)$ is absolutely dilatable if there exist a
tracial von Neumann algebra $(N',\tau')$, a unital one-to-one trace preserving and $w^*$-continuous  $\star$-homomorphism $J:(N,\tau)\to (N',\tau')$ and a trace preserving $\star$-automorphism $U:(N',\tau')\to (N',\tau')$ such that
\begin{equation}\label{DilPpty}
T^k=\E U^kJ,\qquad k\geq 0,
\end{equation}
where $\mathbb{E}:N'\to N$ is the conditional 
expectation associated with $J$.
\end{definition}

Any absolutely dilatable operator is positive and trace preserving. Indeed with the above notation, (\ref{DilPpty}) with $k=1$ yields $T=\mathbb{E}UJ$ and $J,U,\mathbb{E}$ are all positive
and trace preserving.

If $T$ satisfies Definition \ref{rmq superdilatation}, 
then applying Lemma 
\ref{lem superdilatation}
we obtain that for all $1\leq p<\infty$, 
$J$ (resp. $\mathbb{E}$) induces a
complete contraction
$L^p(N,\tau)\to L^p(N',\tau')$ (resp. $L^p(N',\tau')\to L^p(N,\tau)$)
and that $U$ induces an invertible complete isometry $L^p(N',\tau')\to L^p(N',\tau')$.
Moreover (\ref{DilPpty}) holds true on $L^p$-spaces. We therefore obtain the 
following lemma.

\begin{lemme}\label{lem absdila implique dila p}
If $T:(N,\tau)\to (N,\tau)$ is absolutely dilatable, then for every $1\leq p <\infty$,
$T$ induces a contraction $T_p:L^p(N,\tau)\to L^p(N,\tau)$
and $T_p$ is completely $p$-dilatable.
\end{lemme}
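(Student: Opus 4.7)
The plan is to apply Lemma \ref{lem superdilatation} to each of the four maps $T$, $J$, $\mathbb{E}$ and $U$ supplied by absolute dilatability, and then extend the identity (\ref{DilPpty}) from $N$ to $L^p(N,\tau)$ by density. The whole argument is essentially bookkeeping around Lemma \ref{lem superdilatation}; the only nontrivial verification is the invertibility at the $L^p$-level.

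First, since any absolutely dilatable operator is positive and trace preserving (as observed in the paragraph after Definition \ref{rmq superdilatation}), Lemma \ref{lem superdilatation} produces the required complete contraction $T_p:L^p(N,\tau)\to L^p(N,\tau)$. The same lemma applied to $J$ produces a complete contraction $J_p:L^p(N,\tau)\to L^p(N',\tau')$ (actually a complete isometry, since $J$ is a one-to-one $\star$-homomorphism), and applied to the conditional expectation $\mathbb{E}:N'\to N$ produces a complete contraction $\mathbb{E}_p:L^p(N',\tau')\to L^p(N,\tau)$.

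For $U$, I apply Lemma \ref{lem superdilatation} to both $U$ and $U^{-1}$, each being a trace preserving $\star$-automorphism, hence a trace preserving one-to-one $\star$-homomorphism. This yields complete isometries $U_p$ and $(U^{-1})_p$ on $L^p(N',\tau')$. By the uniqueness part of the lemma, both compositions $U_p\circ (U^{-1})_p$ and $(U^{-1})_p\circ U_p$ coincide with the identity on the dense subspace $N'\cap L^p(N',\tau')$, hence on all of $L^p(N',\tau')$ by continuity. Thus $U_p$ is an invertible complete isometry, and $U_p^k$ extends $U^k$ for every $k\geq 0$. This is the step I regard as the only one not strictly mechanical.

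Finally, for each $k\geq 0$, the two bounded operators $T_p^k$ and $\mathbb{E}_pU_p^kJ_p$ on $L^p(N,\tau)$ agree on the norm-dense subspace $N\cap L^p(N,\tau)$ by (\ref{DilPpty}) together with the defining compatibility of each $L^p$-extension. Continuity then gives $T_p^k=\mathbb{E}_pU_p^kJ_p$ on all of $L^p(N,\tau)$, which is precisely the complete $p$-dilatability from Definition \ref{def comp p dilatable} (with $J_p$, $\mathbb{E}_p$ and $U_p$ playing the roles of $J$, $Q$ and $U$ there).
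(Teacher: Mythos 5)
Your proposal is correct and follows essentially the same route as the paper, whose justification is precisely the paragraph preceding the lemma: apply Lemma \ref{lem superdilatation} to $J$, $\mathbb{E}$ and $U$ (and to $U^{-1}$ for invertibility) and extend (\ref{DilPpty}) from the dense subspace $N\cap L^p(N,\tau)$ by continuity. The only detail left tacit in your write-up is that $T$ itself is a complete contraction on $N$ (needed to invoke Lemma \ref{lem superdilatation} for $T$), which is immediate since $T=\mathbb{E}UJ$ is a composition of complete contractions.
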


In Section \ref{section Finite Schur multipliers}, we consider dilation properties of unital positive Schur multipliers on matrices. Let $n\geq 1$ and denote by $\Mn$ the space of $n\times n$ matrices with complex coefficients. Let $T_M:\Mn\to \Mn$ be the Schur multiplier
defined by $T_M([a_{ij}]_{1\leq i,j\leq n})=
[m_{ij}a_{ij}]_{1\leq i,j\leq n}$, for some 
$M=(m_{ij})_{1\leq i,j\leq n}$  in $\Mn$. 
Assume that $T_M$ is unital and positive. We prove that if $T_M$ 
(regarded as acting on $S^p_n$)
is completely $p$-dilatable for some $1< p\neq 2<+\infty$, then there exist a tracial normalised von Neumann algebra $(N,\tau)$
and unitaries $v_1,\dots,v_n$ in $N$ 
such that for all $1\leq i,j\leq n$,
$$
m_{ij}=\tau( v_i^*v_j).
$$
By combining with results of \cite{HaagerupMusat}, we deduce that any Schur multiplier $T_M$ as before is completely $p$-dilatable for some $1\leq p\neq 2<+\infty$ if and only if $T_M$ 
is absolutely dilatable. 

In Section \ref{section Discrete Schur multipliers}, 
we extend the above result to Schur multipliers  
on $B(l^2(I))$ for any infinite index set $I$. In the last section, we consider 
simultaneous 
absolute dilations for finite 
commuting families of operators. We obtain that a finite
family of 
Schur multipliers is simultaneously absolutely dilatable 
if and only if each of these  Schur multipliers 
is absolutely dilatable.

\section{Finite Schur multipliers}\label{section Finite Schur multipliers}
In the following, we denote by $\Mn$ the space of $n\times n$ matrices with complex coefficients. Let $\tr_n$ be the usual trace on 
$M_n$. Unless otherwise stated we will consider $M_n$ as equipped with
the normalized trace 
$\tau_n:=\frac{1}{n}\tr_n$. The two traces $\tau_n$ and $\tr_n$ are needed in the following.

Let $M=(m_{ij})_{1\leq i,j\leq n}$ be an element of $\Mn$.
We denote by $T_M: M_n\to M_n$ the Schur multiplier associated with $M$, defined by
$$
T_{M}([b_{ij}]_{1\leq i,j\leq n})=[m_{ij}b_{ij}]_{1\leq i,j\leq n},\qquad  [b_{ij}]_{1\leq i,j\leq n}\in \Mn.
$$

Firstly, we recall some links between $T_M$ and $M$. 
In the next theorem,
the first point is in \cite[Theorem 3.7]{Paulsen}
and the second one is clear.
\begin{theorem}\label{th lien matrice et multiplicateur}
Let $M=(m_{ij})_{1\leq i,j\leq n}\in M_n$, then

\begin{enumerate}
\item $T_M$ is positive if and only if $T_M$ is completely 
positive if and only if $M$ is positive semi-definite.
\item $T_M$ is unital if and only if all diagonal entries of $M$ are equal to 1.
\end{enumerate}
\end{theorem}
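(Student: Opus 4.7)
The plan is to handle the two assertions separately, with (2) being immediate and (1) being the substantive content, namely the classical Schur product theorem phrased in terms of complete positivity.

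For part (2), I would simply evaluate $T_M$ at the identity: $T_M(I_n)=[m_{ij}\delta_{ij}]$ is diagonal with entries $m_{11},\dots,m_{nn}$, so $T_M(I_n)=I_n$ if and only if $m_{ii}=1$ for all $i$. No further argument is needed.

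For part (1), I would prove the chain of three implications. Complete positivity trivially implies positivity, so I only need to establish the other two. To show that $T_M$ positive forces $M$ to be positive semi-definite, I would apply $T_M$ to the rank-one positive matrix $E=ee^*\in M_n$, where $e=(1,\dots,1)^{T}\in \C^n$; this matrix has all entries equal to $1$, so $T_M(E)=M$, and positivity of $T_M$ then gives $M\geq 0$. Conversely, assuming $M\geq 0$, I would decompose $M=\sum_{k=1}^{r}v_k v_k^*$ (via the spectral theorem or a Cholesky-type factorization), so that $m_{ij}=\sum_{k}v_k^{(i)}\overline{v_k^{(j)}}$. Let $D_k=\mathrm{diag}(v_k^{(1)},\dots,v_k^{(n)})\in M_n$. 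A direct computation shows that the Schur multiplier associated with the rank-one positive matrix $v_k v_k^*$ is the map $B\mapsto D_k B D_k^*$, which is completely positive. Summing over $k$ yields $T_M(B)=\sum_{k=1}^{r}D_k B D_k^*$, exhibiting $T_M$ as a sum of completely positive maps, hence completely positive.

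I do not anticipate any genuine obstacle: the decomposition $M=\sum_k v_k v_k^*$ is standard for positive semi-definite matrices, and the verification that $B\mapsto DBD^*$ acts on entries as Schur multiplication by the rank-one matrix with entries $v^{(i)}\overline{v^{(j)}}$ is a one-line calculation. Since the paper states that the first assertion is contained in \cite[Theorem 3.7]{Paulsen} and the second is clear, the natural write-up is simply to record this reference together with the observation for (2), which is what I would do in the final text.
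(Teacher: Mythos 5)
Your argument is correct and is exactly the classical one behind the citation the paper gives (\cite[Theorem 3.7]{Paulsen}): the all-ones rank-one matrix for necessity, the decomposition $M=\sum_k v_kv_k^*$ and the representation $T_M(B)=\sum_k D_kBD_k^*$ for sufficiency, and evaluation at $I_n$ for unitality. Since the paper itself offers no proof beyond that reference, there is nothing further to compare; your write-up is a faithful expansion of it.
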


We recall the following definition from 
\cite[Definition 4.1.7]{Takesaki1}.

\begin{definition}
Let $I$ be an index set.
A family $\lbrace w_{ij} : i,j\in I\rbrace $ of 
elements in a von Neumann algebra $N$ is called a set 
of matrix units if 
\begin{enumerate}
\item For all $i,j\in I$, $w_{ij}^*=w_{ji}$;
\item For all $i,j,k,l\in I$, $w_{ij}w_{kl}=\delta_{jk}w_{il}$;
\item $\sum_{i\in I} w_{ii}=1$ in the strong topology.
\end{enumerate}
\end{definition}

We give an alternative version of  \cite[Theorem 4.1.8]{Takesaki1}. 
In the sequel we let $e_{ij},\, 1\leq i,j\leq n$, 
denote the standard matrix units of $M_n$.

\begin{theorem}\label{bijection takasaki}
Let $(N,\tau)$ be a tracial von Neumann algebra.  
Suppose that $\lbrace w_{ij}:1\leq i,j\leq n\rbrace$ is a 
set of matrix units in $N$. Let $e=w_{11}$
and let $N_1=eNe$ be equipped with 
the restriction $\tau_1$ of $\tau$
to $N_1$. Then the following mapping is 
a trace preserving $\star$-isomorphism:
$$
\rho: N\to (\Mn, \tr_n)\otimes (N_1,\tau_1) 
\text {, } x\mapsto \sum_{1\leq i,j\leq n} e_{ij}\otimes w_{1i}xw_{j1}.
$$
\end{theorem}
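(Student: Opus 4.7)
The plan is to verify directly that the formula defining $\rho$ gives a well-defined trace-preserving $\star$-isomorphism, all checks reducing to the matrix-unit relations $w_{ij}w_{kl}=\delta_{jk}w_{il}$ together with $\sum_i w_{ii}=1$.

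First I would show that $\rho$ takes values in $M_n\otimes N_1$. For each $i,j$ one checks $ew_{1i} = w_{11}w_{1i} = w_{1i}$ and $w_{j1}e = w_{j1}w_{11} = w_{j1}$, so $w_{1i}xw_{j1}\in eNe = N_1$. Linearity is immediate. The $\star$-property follows from $(w_{1i}xw_{j1})^* = w_{1j}x^*w_{i1}$ and $e_{ij}^*=e_{ji}$, which together give $\rho(x)^*=\rho(x^*)$ after relabeling. Multiplicativity is the main index computation: expanding $\rho(x)\rho(y)$ produces a factor $w_{j1}w_{1k}=\delta_{jk}w_{j1}w_{1j}$, after which the key step is the identity $\sum_j w_{j1}w_{1j}=\sum_j w_{jj}=1$ (using property (2) with $k=1,\ l=j$, then property (3)). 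Using this, $\rho(x)\rho(y)$ collapses to $\sum_{i,l} e_{il}\otimes w_{1i}xyw_{l1} = \rho(xy)$.

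Next I would establish bijectivity. For injectivity, if $\rho(x)=0$ then $w_{1i}xw_{j1}=0$ for all $i,j$; multiplying on the left by $w_{i1}$ and on the right by $w_{1j}$ gives $w_{ii}xw_{jj}=0$, and summing over $i,j$ yields $x = (\sum_i w_{ii}) x (\sum_j w_{jj}) = 0$. For surjectivity, given $e_{ij}\otimes y$ with $y\in N_1$, I would set $x:=w_{i1}yw_{1j}\in N$ and compute $\rho(x) = \sum_{k,l} e_{kl}\otimes w_{1k}w_{i1}yw_{1j}w_{l1} = \sum_{k,l}\delta_{ki}\delta_{jl}\, e_{kl}\otimes eye = e_{ij}\otimes y$ since $eye=y$. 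By linearity this gives surjectivity onto the algebraic tensor product, and one then extends to the von Neumann algebra tensor product by a standard normality/$w^*$-continuity argument (each slice map $x\mapsto w_{1i}xw_{j1}$ is $w^*$-continuous, hence so is $\rho$).

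Finally, trace preservation is a short computation: with the trace $\tr_n\otimes\tau_1$ on the target,
\begin{equation*}
(\tr_n\otimes\tau_1)(\rho(x)) = \sum_{i,j}\tr_n(e_{ij})\,\tau(w_{1i}xw_{j1}) = \sum_i \tau(w_{1i}xw_{i1}) = \sum_i \tau(xw_{i1}w_{1i}) = \tau\bigl(x\textstyle\sum_i w_{ii}\bigr) = \tau(x),
\end{equation*}
using the trace property at the third equality and property (3) at the last. There is no serious obstacle; the only point demanding care is the collapse $\sum_j w_{j1}w_{1j}=1$ in the multiplicativity check, which is precisely where the unit axiom (3) of a matrix-unit system enters the proof.
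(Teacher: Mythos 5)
Your proof is correct. The paper itself gives no argument for this theorem---it simply cites \cite[Lemma 2.2]{LMZ} and its proof---so your direct verification from the matrix-unit axioms supplies exactly the details the paper omits, and every step (range in $eNe$, the $\star$-property, the collapse $\sum_j w_{j1}w_{1j}=\sum_j w_{jj}=1$ in the multiplicativity check, injectivity and surjectivity, and the trace computation) checks out. Two small remarks: since $M_n$ is finite dimensional, the algebraic tensor product $M_n\otimes N_1$ already equals the von Neumann tensor product, so the final ``extend by normality'' step in your surjectivity argument is not needed (though normality of $\rho$ itself is automatic for a bijective $\star$-homomorphism of von Neumann algebras); and since $\tau$ is only assumed normal semi-finite, the cyclicity step $\tau(w_{1i}xw_{i1})=\tau(xw_{i1}w_{1i})$ is most safely justified for $x\geq 0$ by writing $w_{1i}xw_{i1}=a^*a$ with $a=x^{1/2}w_{i1}$ and using $\tau(a^*a)=\tau(aa^*)$.
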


\begin{proof}
See \cite[Lemma 2.2]{LMZ} and its proof.
\end{proof}

We refer to \cite[lemma 2.1]{HaagerupMusat} for the 
following lemma.

\begin{lemme}\label{lemme existence de u}
Let $P$ be a von Neumann algebra, let $n\geq 1$
be an integer and
let $\lbrace f_{ij}:\text{ }1\leq i,j\leq n\rbrace$
and $\lbrace g_{ij}:\text{ }1\leq i,j\leq n\rbrace$ 
be two sets of matrix units in $P$. 
Then there exists an unitary $u\in P$ such that:
$$
uf_{ij}u^*=g_{ij}, \qquad 1\leq i,j\leq n.
$$
\end{lemme}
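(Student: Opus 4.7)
The plan is to first produce a single partial isometry $v\in P$ implementing the Murray--von Neumann equivalence $f_{11}\sim g_{11}$, and then to assemble $u$ explicitly out of $v$ and the two matrix-unit systems. The structure of the argument mimics the classical construction that shows two systems of matrix units inside $M_n$ are conjugate by a unitary, except that here a genuine comparison step is needed because $P$ is arbitrary.

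The principal obstacle, which I would address first, is proving $f_{11}\sim g_{11}$ in $P$. Observe that for each $i$, the element $f_{i1}$ is a partial isometry satisfying $f_{i1}^*f_{i1}=f_{1i}f_{i1}=f_{11}$ and $f_{i1}f_{i1}^*=f_{i1}f_{1i}=f_{ii}$, so $f_{ii}\sim f_{11}$ for all $i$; similarly $g_{ii}\sim g_{11}$. Thus the identity $1=\sum_{i=1}^n f_{ii}=\sum_{i=1}^n g_{ii}$ decomposes in two ways as a sum of $n$ mutually orthogonal, mutually equivalent projections. I would then split $P$ into its finite and properly infinite parts via a central projection $z_f$. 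On the finite summand $z_f P$ the center-valued trace $\tau$ must assign equal values to the $n$ summands in each decomposition, forcing $\tau(z_f f_{11})=z_f/n=\tau(z_f g_{11})$ and hence $z_f f_{11}\sim z_f g_{11}$. On the properly infinite summand $(1-z_f)P$, both $(1-z_f)f_{11}$ and $(1-z_f)g_{11}$ have central support $1-z_f$ and are infinite: were $(1-z_f)f_{11}$ finite, the orthogonal sum $\sum_i(1-z_f)f_{ii}=1-z_f$ would be finite, contradicting proper infiniteness. Two infinite projections with the same central support in a properly infinite algebra are equivalent, so again $f_{11}\sim g_{11}$ on this summand. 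Fix once and for all a partial isometry $v\in P$ with $v^*v=f_{11}$ and $vv^*=g_{11}$; note that $vf_{11}=v=g_{11}v$, and therefore $vf_{11}v^*=g_{11}$ and $v^*g_{11}v=f_{11}$.

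Once $v$ is in hand, I would set
$$u:=\sum_{i=1}^n g_{i1}\,v\,f_{1i}$$
and verify that $u$ is a unitary in $P$ satisfying $uf_{ij}u^*=g_{ij}$. These verifications are pure matrix-unit bookkeeping: using the relations $f_{ab}f_{cd}=\delta_{bc}f_{ad}$ and $g_{ab}g_{cd}=\delta_{bc}g_{ad}$, together with $vf_{11}v^*=g_{11}$ and $v^*g_{11}v=f_{11}$, the cross-terms in $uu^*$, $u^*u$ and $uf_{ij}u^*$ all collapse to diagonal sums, yielding $uu^*=\sum_i g_{i1}g_{11}g_{1i}=\sum_i g_{ii}=1$, $u^*u=\sum_i f_{i1}f_{11}f_{1i}=1$, and $uf_{ij}u^*=g_{i1}g_{11}g_{1j}=g_{ij}$. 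All the real content of the proof thus lies in the comparison-theoretic equivalence $f_{11}\sim g_{11}$ of the previous paragraph; the assembly of $u$ out of $v$ is the classical formula one would write down inside $M_n$ itself, extended to $P$ in the only way it can be.
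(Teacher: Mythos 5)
Your overall strategy is sound and is essentially the standard argument (the paper itself gives no proof here, simply citing \cite[Lemma 2.1]{HaagerupMusat}): reduce everything to the Murray--von Neumann equivalence $f_{11}\sim g_{11}$, choose $v$ with $v^*v=f_{11}$ and $vv^*=g_{11}$, and set $u=\sum_i g_{i1}vf_{1i}$. The matrix-unit bookkeeping in your last paragraph is correct, and so is the treatment of the finite summand via the centre-valued trace.

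The gap is in the properly infinite summand. The principle you invoke --- ``two infinite projections with the same central support in a properly infinite algebra are equivalent'' --- is false in general: in $B(H)$ with $\dim H=\aleph_1$, a projection of rank $\aleph_0$ and one of rank $\aleph_1$ are both (properly) infinite with central support $1$, yet are not equivalent. The correct general statement (e.g.\ Kadison--Ringrose, Corollary 6.3.5) requires the projections to be countably decomposable, which you cannot assume since $P$ is arbitrary. The repair uses precisely the structure you discarded after deriving infiniteness. Your argument in fact shows that $z f_{11}$ is infinite for \emph{every} nonzero central $z\leq 1-z_f$, so $(1-z_f)f_{11}$ is \emph{properly} infinite. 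By the halving lemma (valid with no countability hypothesis), a properly infinite projection $p$ splits as a sum of $n$ mutually orthogonal projections each equivalent to $p$; consequently any sum of $n$ mutually orthogonal copies of $p$ is again equivalent to $p$. Applied to $1-z_f=\sum_i (1-z_f)f_{ii}$ this gives $1-z_f\sim (1-z_f)f_{11}$, and likewise $1-z_f\sim (1-z_f)g_{11}$, whence $(1-z_f)f_{11}\sim(1-z_f)g_{11}$. With this substitution the comparison step is complete and the rest of your proof goes through verbatim.
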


We recall a classical duality
result and we give a proof for the sake of completeness. 

\begin{lemme}\label{lemme normalisateur}
Let $1<p,q<+\infty$ such that $\frac{1}{p}+\frac{1}{q}=1$, and let $(N,\tau)$ 
be a tracial von Neumann algebra.
For all $x\in L^p(N)\setminus\{0\}$, there exists a unique $y\in L^q(N)$
such that $\|y\|_q=1$ and $\tau(xy)=\|x\|_p$. (This element is called
the norming functional of $x$.)

Assume that $\|x\|_p=1$ and  write 
the polar decomposition of $x$ as $x=u|x|$. Then
the norming functional $y$ of $x$ satisfies
\begin{align} \label{forme y HB}
y=|x|^{p-1}u^*\quad\hbox{and}\quad |y|=u|x|^{p-1}u^*.
\end{align} 
Moreover, its polar decomposition is $y=u^*|y|$.
\end{lemme}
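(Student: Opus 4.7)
The approach is to prove existence by direct verification of the explicit candidate $y := |x|^{p-1}u^*$, and to derive uniqueness from the (uniform) convexity of noncommutative $L^q$-spaces. By homogeneity I reduce to $\|x\|_p = 1$. The algebraic fact driving every subsequent line is that $u^*u$ equals the support projection $s(|x|)$ of $|x|$, and hence commutes with all $|x|^\alpha$ ($\alpha>0$) and acts as the identity on their ranges.

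Setting $y := |x|^{p-1}u^*$, I would compute $y^*y = u|x|^{2(p-1)}u^*$, observe that the positive element $u|x|^{p-1}u^*$ squares to the same operator (this uses $u^*u\cdot|x|^{p-1} = |x|^{p-1}$), and thereby identify $|y| = u|x|^{p-1}u^*$. Using $(p-1)q = p$ and traciality,
$$
\|y\|_q^q = \tau(|y|^q) = \tau\bigl(u|x|^p u^*\bigr) = \tau(|x|^p u^*u) = \tau(|x|^p) = 1.
$$
The identical mechanism yields $\tau(xy) = \tau(u|x|^p u^*) = 1 = \|x\|_p$, proving existence together with the stated formulas for $y$ and $|y|$. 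For the polar decomposition $y = u^*|y|$, a direct multiplication gives $u^*|y| = u^*u\,|x|^{p-1}u^* = |x|^{p-1}u^* = y$; moreover the initial projection of $u^*$, namely $uu^*$, coincides with the range projection of $|y| = u|x|^{p-1}u^*$ (using injectivity of $|x|^{p-1}$ on $s(|x|)H$), so $u^*$ is genuinely the partial isometry in the polar decomposition of $y$.

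For uniqueness, given any other norming functional $y'$, the midpoint $z := (y+y')/2$ satisfies $\|z\|_q \leq 1$ while $\tau(xz) = \|x\|_p$, forcing $\|z\|_q = 1$. Since $L^q(N)$ is uniformly (in particular strictly) convex for $1 < q < \infty$ by the noncommutative Clarkson inequalities, this collapse of the triangle inequality forces $y = y'$. I expect the main obstacle to be careful bookkeeping with support projections: remembering that $u^*u$ rather than $uu^*$ is the support of $|x|$, that it commutes with all $|x|^\alpha$, and correctly identifying $u^*$ as the partial isometry in the polar decomposition of $y$. Once these points are settled, traciality collapses every identity in one line.
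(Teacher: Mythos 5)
Your argument is correct, and the computational core --- taking $y=|x|^{p-1}u^*$, identifying $|y|=u|x|^{p-1}u^*$ by checking that this positive element squares to $y^*y$, and verifying via $u^*u=s(|x|)$ that $u^*$ has the right initial/final spaces to be the partial isometry in the polar decomposition of $y$ --- is essentially identical to the paper's proof of the second part. The only genuine divergence is in the first part: the paper obtains both existence and uniqueness of the norming functional in one stroke by citing smoothness of $L^p(N)$ (Pisier--Xu, Corollary 5.2), whereas you get existence for free from the explicit candidate and then derive uniqueness from strict (uniform) convexity of the dual space $L^q(N)$ via the Clarkson inequalities and a midpoint argument. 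These are two faces of the same duality (strict convexity of $X^*$ implies smoothness of $X$), so neither route is more elementary; yours has the mild advantage of making the existence part self-contained rather than outsourced, at the cost of invoking uniform convexity, which is a slightly heavier input than needed. One small bookkeeping remark: your justification that $uu^*$ is the range projection of $|y|$ rests on the density of $\operatorname{im}(|x|^{p-1})$ in $s(|x|)H$ (equivalently $\overline{\operatorname{im}(a)}=\ker(a)^\perp$ for positive $a$), which is what your parenthetical about injectivity amounts to; the paper phrases the same fact as the inclusion $\operatorname{im}(|y|)^\perp\subset\ker(u^*)$. Both are fine.
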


\begin{proof}
The first part of the statement means that 
$L^p(N)$ is smooth. This well-known fact follows e.g. 
from \cite[Corollary 5.2]{PX}.

For the second part, assume that $\|x\|_p=1$ and 
set $y=|x|^{p-1}u^*$.  Then we
have $y^*=u|x|^{p-1}$ and $yx=|x|^p$. 
This implies that $\tau(yx)=\|x\|_p^p=1$ and $\|y\|_{q}^p=\tau(|x|^p)=1$. 
Hence $y$ is the norming functional of $x$.

We will now prove that $|y|=u|x|^{p-1}u^*$ and $y=u^*|y|$. We have,
$$
(u|x|^{p-1}u^*)^2=u|x|^{p-1}u^*u|x|^{p-1}u^*=u|x|^{2(p-1)}u^*=y^*y.
$$
So we obtain $|y|=u|x|^{p-1}u^*$. We deduce that
$$
u^*|y|=u^*u|x|^{p-1}u^*=|x|^{p-1}u^*=y.
$$ 

For the polar decomposition,
it remains to show that $\im(|y|)^\perp \subset \ker u^*$.
We have $\im(|y|)^\perp=\ker(|y|)=\ker(y)$.
Let $h$ be an element of $\im(|y|)^\perp$, 
then $|x|^{p-1}u^*(h)=y(h)=0$, and so 
$u^*(h) \in \ker (|x|^{p-1})=\ker(|x|)$.
Moreover $u^*(h)\in \im(u^*)\subset \ker(|x|)^\perp$. Consequently,
$u^*(h)=0$, and hence $h$ belongs to $\ker(u^*)$.
\end{proof}

Let $H$ be a Hilbert space and
let $x$ be a closed densely defined operator on $H$.
We let $D(x)\subset H$ denote the domain of $x$. If $x$ is selfadjoint, then we let $s(x) : H\to H$ denote the support of $x$, which is equal to 
the orthogonal projection whose range is 
$\overline{\im(x)}$.

\begin{lemme}\label{lemme xy=0 y=0}
Let $(\M, \tau)$ be a tracial von Neumann algebra, let  $x\in L_0(\M)$ be a positive element and let $z\in s(x)\M s(x)$ such that $xzx=0$. Then $z=0$.
\end{lemme}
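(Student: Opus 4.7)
My plan is to reduce the statement to a bounded problem via spectral projections of $x$ and then use the invertibility of $x$ on appropriate spectral subspaces to cancel it from both sides.

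First I would introduce, for each $n\geq 1$, the spectral projection $p_n=\chi_{[1/n,n]}(x)$. Since $x$ is positive and $\tau$-measurable, $p_n\in \mathcal{M}$, $p_n$ commutes with $x$, and $p_n\uparrow s(x)$ strongly (indeed $s(x)=\chi_{(0,\infty)}(x)$ because $x\geq 0$). On the range of $p_n$, $x$ is bounded and bounded below, so one can define the bounded element $y_n:=f_n(x)\in \mathcal{M}$ where $f_n(t)=t^{-1}$ for $t\in[1/n,n]$ and $f_n(t)=0$ otherwise. By functional calculus, $y_n x=xy_n=p_n$.

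Next I would multiply the identity $xzx=0$ on the left and right by $p_n$. Using that $p_n$ commutes with $x$, one gets
\begin{equation*}
0=p_n x z x p_n=x(p_n z p_n)x.
\end{equation*}
Then multiplying on the left by $y_n$ and on the right by $y_n$ (both bounded operators in $\mathcal{M}$) and using $y_nx=xy_n=p_n$, one obtains
\begin{equation*}
0=y_n x (p_nzp_n)xy_n=p_n(p_nzp_n)p_n=p_nzp_n.
\end{equation*}

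Finally, since $z\in s(x)\mathcal{M}s(x)$ we have $s(x)zs(x)=z$, and since $p_n\to s(x)$ strongly and $z$ is bounded, $p_nzp_n\to s(x)zs(x)=z$ in the strong operator topology. Combined with $p_nzp_n=0$ for every $n$, this gives $z=0$.

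I do not expect a serious obstacle here: the only delicate point is keeping track of domains for the unbounded positive operator $x$, but all manipulations take place on ranges of spectral projections where $x$ is bounded, so Borel functional calculus in the semi-finite von Neumann algebra $\mathcal{M}$ makes every step rigorous.
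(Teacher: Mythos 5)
Your proof is correct, and it takes a genuinely different route from the paper's. The paper reduces at once to the case $s(x)=1$ (by replacing $\M$ with $s(x)\M s(x)$), then argues pointwise on vectors: for $\xi\in D(xzx)$, the identity $x(zx\xi)=0$ together with injectivity of $x$ gives $zx\xi=0$, a density-of-domain argument upgrades this to $zx=0$ as a $\tau$-measurable operator, and taking adjoints and repeating yields $z^*=0$. You instead truncate spectrally: the projections $p_n=\chi_{[1/n,n]}(x)$ and the local inverses $y_n=f_n(x)$ let you cancel $x$ from both sides of $x(p_nzp_n)x=0$ by purely bounded-operator computations, and the conclusion follows from $p_n\uparrow s(x)$ strongly together with $s(x)zs(x)=z$ (the uniform bound $\|p_n\|\le 1$ justifies $p_nzp_n\to z$ strongly). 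Your version trades the paper's brevity for robustness: every step happens inside $\M$ on ranges where $x$ is bounded and boundedly invertible, so you avoid the one delicate point of the paper's argument, namely that $D(xzx)$ is a core for the closed operator $zx$ so that vanishing there forces $zx=0$. Both arguments use the hypothesis $z\in s(x)\M s(x)$ exactly once and in the same essential way. No gap to report.
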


\begin{proof}
Changing $\M$ into $s(x)\M s(x)$, 
we may assume that $s(x)=1$. This means that ${\rm ker}(s)=\{0\}$. 
Let $z\in \M$ such that $xzx=0$. For any 
$\xi\in D(xzx)$, we have,
$$
xzx(\xi)=0,
$$
hence $zx(\xi)=0$.  By density of $D(xzx)$ in $D(zx)$, 
this implies that $zx=0$. We deduce that
$xz^*=0$. Then arguing as above,
we obtain that $z^*=0$, and hence $z=0$.
\end{proof}

The next definition is from \cite[Definition 1.3]{HaagerupMusat}.

\begin{definition} 
Let $(N,\tau)$ be a tracial von Neumann algebra and assume 
that $\tau$ is normalised.
We say that $T:(N,\tau)\to (N,\tau)$ is factorisable if there exist another tracial normalised von Neumann algebra $(N',\tau')$ and two unital trace-preserving 
$\star$-homomorphisms $\widetilde{J}, J:(N,\tau)\to (N',\tau')$ 
such that $T=\widetilde{J}_1^*J$.
\end{definition}

By Theorem \ref{th lien matrice et multiplicateur}, a Schur multiplier $T_M$ is positive and unital if and only if $M$ is a positive semi-definite complex matrix having all diagonal entries equal to 1. 

The following two theorems are \cite[Proposition 2.8]{HaagerupMusat} and \cite[Theorem 4.4]{HaagerupMusat},
respectively.

\begin{theorem}\label{th equi facto uu}
Let $M=(m_{ij})_{1\leq i,j\leq n}$ be a positive semi-definite complex matrix having all diagonal entries equal to 1. Then the Schur multiplier $T_M$ associated with $M$ is factorisable if and only if there exist a tracial normalised von Neumann algebra $(N,\tau)$ and an $n$-tuple $(v_1,\dots,v_n)$ of
unitaries of $N$ such that for all $1\leq i,j\leq n$:
$$
m_{ij}=\tau(v_i^*v_j).
$$
\end{theorem}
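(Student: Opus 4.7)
The statement is an equivalence, and my plan is to prove both directions.

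For the \emph{if} direction, I would take $(N',\tau'):=(M_n\otimes N,\tau_n\otimes\tau)$, set $u:=\sum_{i=1}^n e_{ii}\otimes v_i$ (a unitary in $N'$), and define $J(x):=x\otimes 1_N$ and $\widetilde{J}(x):=u(x\otimes 1_N)u^*$. Both maps are unital trace-preserving $\star$-homomorphisms $M_n\to N'$, the second one because it is conjugation of a $\star$-homomorphism by a unitary of the tracial algebra. Traciality of $\tau'$ yields $\widetilde{J}_1^{*}=J_1^{*}\circ\mathrm{Ad}(u^*)$, while $J_1^{*}=\mathrm{id}_{M_n}\otimes\tau$. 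The verification then reduces to the computation $u^*(e_{ij}\otimes 1)u=e_{ij}\otimes v_i^*v_j$, which gives $\widetilde{J}_1^{*}J(e_{ij})=\tau(v_i^*v_j)\,e_{ij}=m_{ij}e_{ij}=T_M(e_{ij})$.

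For the \emph{only if} direction, I assume $T_M=\widetilde{J}_1^{*}J$ for a factorisation through $(N',\tau')$. Both $\{J(e_{ij})\}$ and $\{\widetilde{J}(e_{ij})\}$ are sets of matrix units in $N'$, so Lemma \ref{lemme existence de u} produces a unitary $u\in N'$ with $\widetilde{J}(x)=uJ(x)u^*$, and hence again $\widetilde{J}_1^{*}=J_1^{*}\circ\mathrm{Ad}(u^*)$. Theorem \ref{bijection takasaki} applied to $\{J(e_{ij})\}$ yields a trace-preserving $\star$-isomorphism $N'\simeq M_n\otimes P$, where $P:=J(e_{11})N'J(e_{11})$ carries the normalised trace $\widehat{\tau}_P:=n\tau'|_P$; under this identification $J(x)$ becomes $x\otimes 1_P$ and $J_1^{*}$ becomes $\mathrm{id}\otimes\widehat{\tau}_P$. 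Writing $u=\sum_{i,j}e_{ij}\otimes u_{ij}$ and expanding, I would obtain
$$T_M(e_{ij})=(\mathrm{id}\otimes\widehat{\tau}_P)\bigl(u^*(e_{ij}\otimes 1)u\bigr)=\sum_{l,q}\widehat{\tau}_P(u_{il}^{*}u_{jq})\,e_{lq},$$
and matching coefficients with $T_M(e_{ij})=m_{ij}e_{ij}$ forces $\widehat{\tau}_P(u_{il}^{*}u_{jq})=m_{ij}\delta_{li}\delta_{qj}$. The unitality hypothesis then enters decisively: specialising $i=j$, $l=q$ and using $m_{ii}=1$ gives $\|u_{il}\|_{L^2(P)}^2=\delta_{li}$, so $u_{il}=0$ for $l\neq i$. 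Hence $u=\sum_i e_{ii}\otimes v_i$ with $v_i:=u_{ii}$, each $v_i$ is unitary in $P$ (from $u^*u=1$), and the remaining relation ($l=i$, $q=j$) reads $m_{ij}=\widehat{\tau}_P(v_i^{*}v_j)$, closing the argument.

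The delicate step is the reduction from the abstract factorisation $T_M=\widetilde{J}_1^{*}J$ to concrete matrix entries of a single unitary over a corner of $N'$, via the combined use of Lemma \ref{lemme existence de u} and Theorem \ref{bijection takasaki}. Once this reduction is in place, the unitality of $T_M$ is exactly what forces $u$ to be diagonal relative to $\{J(e_{ii})\}$, after which the unitaries $v_i$ and the Gramian identity $m_{ij}=\tau(v_i^{*}v_j)$ drop out automatically.
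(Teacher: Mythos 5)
Your proof is correct. Note that the paper itself gives no proof of this statement: it is quoted verbatim from Haagerup--Musat (their Proposition 2.8), so there is no in-paper argument to compare against. What you have written is essentially a self-contained reconstruction of the Haagerup--Musat argument, and it is pleasant that it runs entirely on tools the paper already sets up: Lemma \ref{lemme existence de u} to conjugate the two unital $\star$-homomorphisms into each other by a unitary $u$, and Theorem \ref{bijection takasaki} to identify $N'$ with $M_n\otimes P$ so that $J$ becomes $x\mapsto x\otimes 1_P$ and $J_1^*$ becomes $\mathrm{id}\otimes\widehat{\tau}_P$. The two computations that carry the weight --- $u^*(e_{ij}\otimes 1)u=\sum_{l,q}e_{lq}\otimes u_{il}^*u_{jq}$ and the extraction $\widehat{\tau}_P(u_{il}^*u_{il})=\delta_{li}$ from $m_{ii}=1$, which kills the off-diagonal blocks of $u$ --- are both right, and the normalisation $\widehat{\tau}_P=n\tau'|_P$ is handled correctly so that $(P,\widehat{\tau}_P)$ is indeed a normalised tracial algebra. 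The converse direction with $u=\sum_i e_{ii}\otimes v_i$ is the standard construction and checks out. It is also worth observing that the same mechanism (diagonality of $u$ forced by unitality, then reading off $m_{ij}$ as $\tau(b\,u_i^*u_j)$ against a density $b$) is exactly what drives the proof of the paper's main Theorem \ref{th principal 2}, so your argument is very much in the spirit of the surrounding text.
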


\begin{theorem}\label{th equi dila facto}
Let $(N,\tau)$ be a normalized
tracial von Neumann algebra.
Then $T:N\to N$ is factorisable if and only if  $T$  is absolutely dilatable where the von Neumann algebra $N'$ is normalised. 
\end{theorem}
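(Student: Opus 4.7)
The theorem asserts the equivalence of factorisability and absolute dilatability (with $N'$ normalised) for a unital trace-preserving map on a normalised tracial von Neumann algebra. I would split the argument into two directions of very unequal difficulty.

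For the easy direction, suppose $T$ is absolutely dilatable via $(N',\tau')$ normalised, $J:N\to N'$, and a $\star$-automorphism $U:N'\to N'$, with associated conditional expectation $\mathbb{E}=J_1^*$. Since $U$ is trace-preserving, the identity $\tau'(U(a)b)=\tau'(a\,U^{-1}(b))$ shows that the dual $(U_1)^*:N'\to N'$ of the induced map $U_1:L^1(N')\to L^1(N')$ is simply $U^{-1}$. Set $\widetilde{J}:=U^{-1}\circ J:N\to N'$, which is again a unital trace-preserving $\star$-homomorphism. Lemma \ref{lem superdilatation} applied to $J$ and to $U^{-1}$, together with uniqueness of the extension, yields $\widetilde{J}_1=(U^{-1})_1\circ J_1$, hence
$$
\widetilde{J}_1^*=J_1^*\circ U=\mathbb{E}\circ U.
$$
Therefore $\widetilde{J}_1^*J=\mathbb{E}UJ=T$, which exhibits $T$ as factorisable with target $(N',\tau')$ normalised.

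The converse is the substantive part. Given a factorisation $T=\widetilde{J}_1^*J$ with $J,\widetilde{J}:N\to N'$ unital, trace-preserving $\star$-homomorphisms, my plan is to build an enlarged normalised tracial von Neumann algebra $(\widetilde{N},\widetilde{\tau})$ carrying a trace-preserving $\star$-automorphism $U$ whose iterates implement $T^k$ sandwiched between an embedding $\iota:N\hookrightarrow\widetilde{N}$ and the canonical conditional expectation $\widetilde{\mathbb{E}}:\widetilde{N}\to N$. The natural candidate, following Kümmerer-type Markov dilations, is an infinite amalgamated free product of copies $(N'_k)_{k\in\mathbb{Z}}$ of $N'$ over $N$, where consecutive factors $N'_k$ and $N'_{k+1}$ are glued along the common image of $N$, using $J$ on one side and $\widetilde{J}$ on the other so as to encode the factorisation. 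The shift that sends each $N'_k$ isomorphically to $N'_{k+1}$ extends to a $\star$-automorphism $U$ of $\widetilde{N}$, and $\iota$ is taken to be the composition of $J$ with the embedding of $N'_0$ into $\widetilde{N}$. Expanding $U^k$ against the amalgamated structure and applying the tower property for $\widetilde{\mathbb{E}}$ should, by induction on $k$, telescope $\widetilde{\mathbb{E}}\,U^k\iota$ into $(\widetilde{J}_1^*J)^k=T^k$.

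The main obstacle, and the reason the original proof in \cite{HaagerupMusat} is nontrivial, is the construction of the amalgamated object itself: one must verify that this infinite amalgamated free product of finite von Neumann algebras over a common finite trace-preserving subalgebra genuinely exists as a \emph{finite} (normalised) von Neumann algebra with a faithful normal tracial state, and that the formal shift lifts to a trace-preserving $\star$-automorphism. Once this is granted, the dilation identity is verified by an induction on $k$: at each step the conditional expectation onto $N'_0$ kills the pieces orthogonal to the amalgam and reduces to the action of $J_1^*$ or $\widetilde{J}_1^*$ on the remaining factor, after which the factorisation hypothesis $T=\widetilde{J}_1^*J$ closes the loop. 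This combinatorial reduction on alternating reduced words is where the shift structure genuinely does the work, but it is the existence and finiteness of the amalgamated free product that is the true technical heart of the proof.
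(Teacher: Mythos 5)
First, note that the paper itself gives no proof of this statement: it is imported verbatim as \cite[Theorem 4.4]{HaagerupMusat}, so your attempt has to be measured against the known proof of that result rather than against anything in this text. Your first direction (absolutely dilatable with $N'$ normalised $\Rightarrow$ factorisable) is complete and correct: the identity $((U^{-1})_1)^*=U$ for a trace-preserving $\star$-automorphism gives $\widetilde{J}_1^*=\mathbb{E}U$ for $\widetilde{J}=U^{-1}J$, hence $T=\mathbb{E}UJ=\widetilde{J}_1^*J$. (Even more directly, $T=\mathbb{E}UJ=J_1^*(UJ)$ with both $J$ and $UJ$ unital trace-preserving $\star$-homomorphisms, so one can take $\widetilde{J}=J$ and replace the second leg by $UJ$.)

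The converse, however, is only an announced strategy, not a proof. You correctly identify the Kümmerer/Anantharaman-Delaroche ``free Markov dilation'': a two-sided chain of copies of $N'$ amalgamated over $N$, with consecutive copies glued via $J$ and $\widetilde{J}$, and the shift as the automorphism. But the two steps you explicitly defer are exactly the content of the theorem: (a) the existence of the iterated amalgamated free product of finite von Neumann algebras along a chain as a von Neumann algebra carrying a faithful normal tracial state for which the gluing conditional expectations are compatible, together with the fact that the index shift extends to a trace-preserving $\star$-automorphism; and (b) the Markov property of such a chain (that the trace-preserving conditional expectation onto the ``past'' restricted to the ``future'' factors through the amalgam), which is what makes $\mathbb{E}U^k\iota$ telescope to $(\widetilde{J}_1^*J)^k$. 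Neither is supplied, and in (b) the bookkeeping is delicate: whether $N$ is embedded via the amalgam or via $J$ into $N'_0$, and which of $J,\widetilde{J}$ sits on which side of each gluing, determines whether the telescoping produces $T^k$ or $(T^*)^k$. It is also worth pointing out that the route actually exploited elsewhere in this paper (Section \ref{section Multivariable case}, and the citation of \cite{A1} for $4\Rightarrow3$ in Theorem \ref{equivalence l2}) avoids free products entirely: one upgrades the factorisation to an \emph{exact} one, $T(x)=(\mathrm{id}\otimes\tau_P)\bigl(u(x\otimes 1_P)u^*\bigr)$ for a unitary $u\in N\overline{\otimes}P$, and then dilates by the tensor shift on $N\overline{\otimes}(\overline{\otimes}_{\mathbb Z}P)$ conjugated by $u$. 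So your outline points at a legitimate alternative proof, but as written it records where the difficulty lies without resolving it.
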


The main result of this paper is
the equivalence $1\Leftrightarrow 5$ of the next theorem.
The other equivalences in this theorem
are already known.

\begin{theorem}\label{th principal 2}
Let $M=(m_{ij})_{1\leq i,j\leq n}$ be a positive semi-definite complex matrix having all diagonal entries equal to 1. The following assertions are equivalent:
\begin{enumerate}
\item there exists $1< p\neq 2 < +\infty$ such that $T_M:
S^p_n\to S^p_n$ is completely $p$-dilatable;
\item for all $1\leq p < +\infty$, 
$T_M:
S^p_n\to S^p_n$ is completely $p$-dilatable;
\item $T_M$ is absolutely dilatable;
\item $T_M$ is factorisable;
\item there exist  a tracial normalised von Neumann algebra $(N,\tau_{N})$ and an $n$-tuple $(v_1,\dots,v_n)$ of
unitaries of $N$ such that for all $1\leq i,j\leq n$:
$$
m_{ij}=\tau(v_i^*v_j).
$$
\end{enumerate}
\end{theorem}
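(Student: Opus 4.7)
The plan is to complete the cycle of implications. The direction $5\Rightarrow 4\Rightarrow 3\Rightarrow 2\Rightarrow 1$ follows immediately from results recalled above: $5\Rightarrow 4$ is Theorem \ref{th equi facto uu}, $4\Rightarrow 3$ is Theorem \ref{th equi dila facto} (noting that $(M_n,\tau_n)$ is normalised), $3\Rightarrow 2$ is Lemma \ref{lem absdila implique dila p}, and $2\Rightarrow 1$ is trivial. The substantive new implication is $1\Rightarrow 5$, and it is the only one I will discuss in detail.

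Suppose $T_M:S^p_n\to S^p_n$ admits a complete $p$-dilation $T_M^k=QU^kJ$ with $1<p\neq 2<\infty$, where $J:S^p_n\to L^p(N',\tau')$ and $Q:L^p(N',\tau')\to S^p_n$ are complete contractions and $U$ is an invertible complete isometry. The first step is to exploit unitality: since $m_{ii}=1$, the map $T_M$ fixes every diagonal matrix $D\in M_n$, so $D=QU^kJ(D)$ for all $k\geq 0$. Collapsing the chain of contractions forces $\|J(D)\|_p=\|D\|_p$ for every such $D$, and the mean ergodic theorem applied to the invertible isometry $U$ on the reflexive space $L^p(N')$ lets me replace $J(D)$ by its Cesaro average so that, without loss of generality, $UJ(D)=J(D)$ and $QJ(D)=D$ for every diagonal $D$.

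Setting $f_i:=J(e_{ii})$, the previous step gives $\|f_i\|_p=1$ and $\|\sum_{i\in F}f_i\|_p=|F|^{1/p}$ for every subset $F\subseteq\{1,\dots,n\}$. The smoothness of $L^p(N')$ expressed in Lemma \ref{lemme normalisateur}, together with the strict convexity of $L^p$ valid precisely because $p\neq 2$, forces the $f_i$ to be pairwise disjoint positive projections in $N'$ summing to $1$. A parallel analysis applied to the off-diagonal units — using that $T_M^k(e_{ij})=m_{ij}^ke_{ij}$ with $|m_{ij}|\leq 1$, together with the support-type argument of Lemma \ref{lemme xy=0 y=0} — produces elements $w_{ij}\in N'$, built from $UJ(e_{ij})$ compressed between the projections $f_i$ and $f_j$, satisfying the matrix-unit relations $w_{ij}^*=w_{ji}$, $w_{ij}w_{kl}=\delta_{jk}w_{il}$, and $\sum_i w_{ii}=1$. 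This rigidity step, converting $L^p$-metric equalities into algebraic matrix-unit relations inside $N'$, is the main obstacle and uses the hypothesis $p\neq 2$ essentially.

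Once the matrix units $\{w_{ij}\}$ are in place, Theorem \ref{bijection takasaki} provides a trace-preserving $\star$-isomorphism $N'\simeq (M_n,\tr_n)\otimes(N_1,\tau_1)$ with $N_1=w_{11}N'w_{11}$. Under this identification $\{w_{ij}\}$ and $\{e_{ij}\otimes 1\}$ are two sets of matrix units in $N'$, so Lemma \ref{lemme existence de u} produces a unitary conjugating one to the other; written in block form, this unitary decomposes as an $n$-tuple $v_1,\dots,v_n$ of unitaries in a suitable normalised tracial algebra. Finally, computing the Schur coefficient through the dilation identity
\begin{equation*}
m_{ij}=\tr_n\bigl(T_M(e_{ij})e_{ji}\bigr)=\tr_n\bigl(QUJ(e_{ij})e_{ji}\bigr)
\end{equation*}
and reading this off through the block structure provided by Theorem \ref{bijection takasaki} yields, after passage to the appropriate normalised trace $\tau$, the identity $m_{ij}=\tau(v_i^*v_j)$ required for assertion $5$.
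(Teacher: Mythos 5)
Your outer structure ($5\Rightarrow4\Rightarrow3\Rightarrow2\Rightarrow1$ from the quoted results, all the work in $1\Rightarrow5$) matches the paper, and the second half of your sketch (two systems of matrix units, Theorem \ref{bijection takasaki}, Lemma \ref{lemme existence de u}, a final trace computation) is the right skeleton. But the rigidity step, which you yourself identify as the main obstacle, contains a genuine error and two genuine gaps.

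First, the error: the elements $f_i=J(e_{ii})$ are \emph{not} projections in general, and no norm identity will make them so. Since $p\neq 2$, Yeadon's theorem gives $J(x)=WBJ_1(x)$ with $J_1$ a $\star$-homomorphism, $W$ a partial isometry and $B$ a positive operator satisfying $\tau_N(B^pJ_1(x))=\tau_n(x)$; the trace change forces $B$ to be a genuinely non-trivial density, not a multiple of a projection. This density cannot be discarded: it is exactly the weight $b$ that reappears at the end, where the trace in assertion 5 is $\tau_{\tilde N}(x)=\tau'(bx)$ on the algebra generated by the $v_i$, and one must verify that $b$ commutes with the $v_i$ (via the commutation of $B$ with $J_1$ and $J_2$) and that $\tau'(b\,\cdot)$ is faithful (via a support argument as in Lemma \ref{lemme xy=0 y=0}). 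Your final step ``reading this off through the block structure'' silently assumes the trace is the unweighted one, which is false.

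Second, the gaps. (i) Your argument only ever analyses $J$, but the identity $m_{ij}=\tau_n(T_M(e_{ij})e_{ji})=\tau_n(QUJ(e_{ij})e_{ji})$ involves $Q$, which is a priori only a complete contraction. The paper's proof hinges on passing to $J':=Q^*:S^q_n\to L^q(N')$, showing it is a complete isometry (from $QJ=\mathrm{Id}$), applying Yeadon to \emph{both} $V:=UJ$ and $J'$, and then using the norming-functional Lemma \ref{lemme normalisateur} applied to $V(1)$ and $J'(1)$ to align the two decompositions ($W_1=W_2^*$, $B_2=B_1^{p-1}$). Without this duality step you have no usable expression for $m_{ij}$ inside $N'$. (ii) Your ``parallel analysis'' producing matrix-unit relations $w_{ij}w_{kl}=\delta_{jk}w_{il}$ from the off-diagonal images is asserted, not proved; carrying it out from scratch amounts to reproving Yeadon's theorem, whereas invoking \cite[Proposition 3.2]{JRS} hands you the $\star$-homomorphisms $J_1,J_2:M_n\to N'$ (hence the matrix units) for free. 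Finally, the mean-ergodic-theorem reduction to $UJ(D)=J(D)$ is both unjustified as a ``without loss of generality'' (replacing $J$ by $PJ$ destroys the dilation identity for $k\geq1$) and unnecessary.
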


\begin{proof}
Using Theorem \ref{th equi dila facto} and Theorem 
\ref{th equi facto uu}, we obtain the equivalences 
$3\Leftrightarrow 4 \Leftrightarrow 5$. 
The implications $3\Rightarrow 2 \Rightarrow 1$ follow 
from Lemma \ref{lem absdila implique dila p}. It 
therefore remains to prove that 1 implies 5. 

Let $1<p\neq 2<+\infty$ and 
suppose that $T_M:
S^p_n\to S^p_n$ is completely $p$-dilatable.
Let $1<q<\infty$ such that $\frac{1}{p}+\frac{1}{q}=1$.
There exist a tracial von Neumann algebra $(N,\tau_N)$, two complete contractions  $J:S_n^p\to L^p(N)$ and $Q: L^p(N)\to S_n^p$ and an
invertible complete  isometry $U:L^p(N)\to L^p(N)$ such that for all $k\in \N$:
\begin{align}\label{formule tm=quj}
T_M^k=QU^kJ.
\end{align}

We set $V=UJ:S_n^p\to L^p(N)$ and $J':=Q^*:S_n^q\to L^q(N)$.
Applying \eqref{formule tm=quj} with $k=1$, we obtain 
that 
\begin{equation}\label{Product}
T_M=(J')^*V. 
\end{equation}
Next applying \eqref{formule tm=quj} with
$k=0$, we obtain that $QJ=Id$. Since $Q,J$ are  complete contractions, this implies that $J$ is a complete isometry.
Likewise, $J^*Q^*=Id$, hence $Q^*$ is a complete isometry. Consequently $V$ and $J'$ are both complete isometries.

Firstly, we will show that we 
can suppose that $J'$ and $V$ are positive. 
Since $p\neq 2$, Yeadon's Theorem describing $L^p$-isometries
applies to $J'$ and $V$. More precisely, by \cite[Proposition 3.2]{JRS}, there exist partial isometries
$W_1,W_2\in N$, positive operators $B_1,B_2$
affiliated with $N$ and $\star$-homomorphisms  
$J_1: \Mn\to N$ and 
$J_2:\Mn \to N$ such that for all $x\in \Mn$, and $i=1,2$,
\begin{align*} 
& B_i \text{ commute with } J_i(x),\\
&\tau_n(x)=\tau_N(B_1^pJ_1(x))=\tau_N(B_2^q J_2(x)),\\
&V(x)=W_1B_1J_1(x) \text{ and }J'(x)=W_2B_2J_2(x),\\
& W_i^*W_i=s(B_i)=J_i(1).
\end{align*}
Note that since the domain space of $V$ has a finite trace,
we  actually have
that $B_1=|V(1)|$ belongs to $L^p(N)$ and 
the polar decomposition of $V(1)$ is $W_1B_1$. 
Likewise $B_2=|J'(1)|$ belongs to $L^q(N)$ 
and the polar decomposition of $J'(1)$ is $W_2B_2$.

It follows from (\ref{Product}) that
for all $x\in S^p_n$ and $y\in S^q_n$, we have
\begin{align}\label{TraceIdentity}
\tau_n(T_M(x)y)=\tau_N(V(x) J'(y)).
\end{align}
 
By Theorem \ref{th lien matrice et multiplicateur}, the operator $T_M$ is unital. Hence if we take $x=y=1$ in the last formula, we obtain
$$
1=\tau_n(1)=\langle W_1B_1,W_2B_2\rangle.
$$
In addition, we have $\|W_1B_1\|_p=\|V(1)\|_p=1$
and $\|W_2B_2\|_q=\|J'(1)\|_q=1$. According to
Lemma \ref{lemme normalisateur}, we deduce that
$W_2B_2=B_1^{p-1}W_1^*$ and $B_2=W_1B_1^{p-1}W_1^*$. Moreover
by the uniqueness of the polar decomposition of $J'(1)$, 
which is both $W_2B_2$ and $W_1^*B_2$, we obtain that $W_1=W_2^*$. 
We define $\Tilde{V}: S_n^p\to L^p(N)$ and $\Tilde{J'}: S_n^q\to L^q(N)$ by setting 
$$
\Tilde{V}(x):=B_1J_1(x) \quad\text{ and }\quad \Tilde{J'}(y):=B_1^{p-1}W_1^*J_2(y)W_1.
$$
for all $x\in S^p_n$ and $y\in S^q_n$.
Note that we both have $\Tilde{V}(.)=W_1^*V(.)$ and 
$V(.)=W_1\Tilde{V}(.)$. This implies that $\|\Tilde{V}(x)\|_p\leq
\|V(x)\|_p$ and $\|V(x)\|_p\leq\|\Tilde{V}(x)\|_p$ for all $x\in S_n^p$.
Then we have $\|V(x)\|_p=\|\Tilde{V}(x)\|_p$ for all $x\in S^p_n$. 
Hence $\Tilde{V}$ is an isometry. We may show as well that 
$\Tilde{V}$ is a complete isometry. Since $J_1$ and $B_1$ commute, 
the operator $\Tilde{V}$ is positive.

Let us  prove that $\Tilde{J'}$ is also a positive complete
isometry. We first note that $B_1^{p-1}$ and $W_1^*J_2(y)W_1$ 
commute for all $y\in \Mn$. Indeed, given any $y\in \Mn$, we have
\begin{align*}
B_1^{p-1}W_1^*J_2(y)W_1&=W_2B_2J_2(y)W_1=W_2J_2(y)B_2W_1=W_2J_2(y)W_2^*W_2B_2W_1\\
&=W_2J_2(y)W_2^*B_1^{p-1}W_1^*W_1=W_2J_2(y)W_2^*B_1^{p-1}=W_1^*J_2(y)W_1B_1^{p-1},
\end{align*}
which proves the commutation property.
Since $B_1^{p-1}$ and  $J_2(y)$ are positive, if $y$ is positive, 
the latter implies that $\Tilde{J'}$ is positive. 

We  now prove that $W_1^*J_2(.)W_1$ is multiplicative. 
Let $x,y\in \Mn$. Since $W_1^*=W_2$, we have
$$
W_1^*J_2(xy)W_1=W_1^*J_2(x)J_2(y)W_1=W_1^*J_2(x)W_2^*W_2J_2(y)W_1
=W_1^*J_2(x)W_1W_1^*J_2(y)W_1,
$$
which proves the result. Consequently,
$W_1^*J_2(.)W_1: M_n\to N$ is a $\star$-homomorphism.

In addition we have, for all $y\in \Mn$,
 \begin{align*}
\tau_n(y)&
=\tau_N\left(B_2^qJ_2(y)\right)\\
& =\tau_N\left(\left(W_1(B_1^{p-1})W_1^*\right)^qJ_2(y)\right)\\
&=\tau_N\left(W_1(B_1^{p-1)})^q W_1^*J_2(y)\right)\\
&=\tau_N\left((B_1^{p-1})^q W_1^*J_2(y)W_1\right).
\end{align*}
We also have
$$
 W_1^*J_2(1)W_1=W_1^*W_2^*W_2W_1=W_1^*W_1W_1^*W_1=s(B_1)s(B_1)=s(B_1).
$$
By the characterisation of  positive complete isometries (see \cite[Theorem 3.1]{JRS} and \cite[Proposition 3.2]{JRS}), we
deduce that 
the operator $\Tilde{J'}$ is an positive complete isometry.

We now check that $T_M=\Tilde{J'}^*\Tilde{V}$. Let $x,y\in \Mn$,
 \begin{align*}
\tau_n(\Tilde{J'}^*\Tilde{V}(x)y)&=
\tau_N(\Tilde{V}(x)\Tilde{J'}(y))=\tau_N(B_1J_1(x)B_1^{p-1}W_1^*J_2(y)W_1)\\
&=\tau_N(W_1B_1J_1(x)W_2B_2J_2(y))=
\tau_N(V(x)J'(y))\\
&=\tau_n(T_M(x)y),
\end{align*}
by (\ref{TraceIdentity}).

It follows from above that we 
may now suppose that $J'$ and $V$ are positive. 
Once again we use the Yeadon decomposition 
for the complete isometries of $J'$ and $V$. 
In the positive case, we can remove the partial isometry in 
the Yeadon decomposition. We therefore obtain that
there exist two positive operators $B_1\in L^p(N)$
and $B_2\in L^q(N)$ as well as two $\star$-homomorphisms  
$J_1: \Mn\to N$ and $J_2:\Mn \to N$ 
such that for all $x\in \Mn$, and $i=1,2$,
\begin{align} 
& B_i \text{ commutes with } J_i(x), 
\label{propriete de Yeadon commutent}\\
&\tau_n(x)=\tau_N(B_1^pJ_1(x))=\tau_N(B_2^q J_2(x)),
\label{propriete de Yeadon trace}\\
&V(x)=B_1J_1(x) \text{ and }J'(x)=B_2J_2(x), 
\label{propriete de Yeadon ecriture}\\
& s(B_i)=J_i(1) \label{support}.
\end{align}
Argunig as above, we obtain that 
$1=\tau_N(B_1B_2)$ from which we deduce that 
$$
B_2=B_1^{p-1}.
$$

We set $B:=B_1^p=B_2^q$, this is an 
element of $L^1(N)$. Thanks to 
\eqref{propriete de Yeadon commutent}, 
$B$ commutes with $J_1(x)$ and $J_2(x)$ for 
all $x\in \Mn$.

Consider the projection $v:=J_1(1)$. Then we have 
$$
v:=J_1(1)=s(B_1)=s(B)=s(B_2)=J_2(1).
$$
We define $\M:=vNv$, and we equip this von Neumann
algebra with the trace $\tau_\M=\tau_{N|_\M}$. 
The $\star$-homomorphisms $J_1,J_2$ are valued in $\M$.
Henceforth we consider $J_1: M_n\to \M$ and $J_2: M_n\to \M$,
they are now unital $\star$-homomorphisms. 
We set
$$
f_{ij}=J_1(e_{ij})\quad\hbox{and}\quad
g_{ij}=J_1(e_{ij}),\qquad 1\leq i,j\leq n.
$$
Then $\lbrace f_{ij}:\text{ }1\leq i,j\leq n\rbrace$ and 
$\lbrace g_{ij}:\text{ }1\leq i,j\leq n\rbrace$ 
are two sets of matrix units.
By Lemma \ref{lemme existence de u}, 
there exists a unitary $u_1\in \M$ such that
\begin{align} \label{eq u}
u_1f_{ij}u_1^*=g_{ij},\qquad 1\leq i,j\leq n.
\end{align}
We set $w_i:=g_{i1}=J_2(e_{i1})$ for all $1\leq i\leq n$. 
We set $N'=w_1\M w_1$ and we equip it with
$\tau_{N'} : =\tau_{\M|_{N'}}$.
We define
$$
\rho: \M\to (\Mn, \tr_n)\otimes (N',\tau_{N'}) 
\text {, } \ x\mapsto \sum_{i,j=1}^n e_{ij}\otimes w_i^*xw_j.
$$
According to Theorem \ref{bijection takasaki}, $\rho$ is a trace
preserving $\star$-isomorphism. 
By Lemma \ref{lem superdilatation} we extend $\rho$ to 
an isometry from $L^p(\M)$ into $L^p(\Mn\otimes N')
=S_n^p\otimes L^p(N')$, that we still denote by  $\rho$.

We note that for all $x\in \Mn$,
$$
\rho(J_2(x))=x\otimes 1_{N'}.
$$
Indeed,  for all $x=\sum_{i,j=1}^n \alpha_{ij}e_{ij}\in \Mn$ we have;
\begin{align*}
\rho(J_2(x))&=\sum_{i,j=1}^n \alpha_{ij}\rho(J_2(e_{ij}))\\
&=\sum_{i,j=1}^n \alpha_{ij}\rho(g_{ij})\\
&=\sum_{i,j=1}^n \alpha_{ij}\sum_{k,l=1}^n e_{kl}\otimes w_k^*g_{ij}w_l\\
&=\sum_{i,j=1}^n \alpha_{ij}\sum_{k,l=1}^n e_{kl}
\otimes g_{1k}g_{ij}g_{l1}\\
&=\sum_{i,j=1}^n \alpha_{ij} e_{ij}\otimes g_{11}\\
&=x\otimes 1_{N'}.
\end{align*}
Since $\rho$ is an unital $\star$-homomorphism, $u:=\rho(u_1)\in \Mn\otimes N'$ is also a unitary. Thanks to 
property \eqref{eq u}, we have $\rho(J_1(x))=\rho(u_1^*J_2(x)u_1)$, for all $x\in \Mn$. So we obtain that for all $x\in \Mn$,
$$
\rho(J_1(x))=u^*(x\otimes 1_{N'})u.
$$

Let us use the commutation of $B$ with $J_2$. 
For all $x\in \Mn$, we have
$$
\rho(B)(x\otimes 1_{N'})=\rho(BJ_2(x))=\rho(J_2(x) B)=(x\otimes 1_{N'})\rho(B).
$$
This implies that $\rho(B)=1\otimes b_{11}$,
for some $b_{11}\in L^p(N')$. 
Since $\tau_M(B)=1$ and $\rho$ is trace preserving, we have 
$$
\tau_{N'}(b_{11})=\dfrac{1}{n}.
$$

Let $z\in \Mn\otimes N'$ such that $\rho(B)^\frac{1}{2}z\rho(B)^\frac{1}{2}=0$. Since $\rho$ is a  bijective homomorphism, we have $B^\frac{1}{2}\rho^{-1}(z)B^\frac{1}{2}=0$. By Lemma \ref{lemme xy=0 y=0}, we obtain that $\rho^{-1}(z)=0$. Finally, we have $z=0$. We summarize this by writing that for all $z\in \Mn\otimes N'$,
\begin{align}\label{rhoByrhoB=0 donc y=0}
\bigl\{\rho(B)^\frac{1}{2}z\rho(B)^\frac{1}{2}=0\bigr\}\,\Rightarrow\, z=0.
\end{align}

We write $u=\sum_{i,j=1}^n e_{ij}\otimes u_{ij}$, with 
$u_{ij}\in N'$. Let 
$x\in S^p_n$ and $y\in S^q_n$.
Owing to the commutation of $B$ and $J_2(x)$, 
we have:
\begin{align*}
\langle B_2J_2(y),B_1J_1(x)\rangle=\tau_N(B_1^{p-1}J_2(y)B_1J_1(x))=\tau_N(BJ_2(y)J_1(x)).
\end{align*}
By (\ref{TraceIdentity}), this implies that
\begin{align}\label{trace Tm en BJ2J1}
\tau_n(yT_M(x))=\tau_N(BJ_2(y)J_1(x)).
\end{align}

We take $y=e_{kk}$ and $x=e_{ll}$ with $1\leq k\neq l\leq n$. We deduce from above  that
\begin{align*}
0&=m_{ll}\tau_n(e_{kk}e_{ll})=\tau_n(e_{kk}T_M(e_{ll}))=\tau_N(BJ_2(e_{kk})J_1(e_{ll}))
\end{align*}
Since $BJ_2(e_{kk})J_1(e_{ll})\in \M$ and $J_2(e_{kk})^2=J_2(e_{kk})$, this implies
\begin{align*}
0&=\tau_\M(BJ_2(e_{kk})J_1(e_{ll}))\\
&=\tau_\M(BJ_2(e_{kk})J_2(e_{kk})J_1(e_{ll}))\\
&=\tau_\M(J_2(e_{kk})BJ_2(e_{kk})J_1(e_{ll}))\\
&=\tau_\M(BJ_2(e_{kk})J_1(e_{ll})J_2(e_{kk}))\\
&=\tau_{\Mn\otimes N'}(\rho(BJ_2(e_{kk})J_1(e_{ll})J_2(e_{kk})))\\
&=\tau_{\Mn\otimes N'}(\rho(B)\rho(J_2(e_{kk}))\rho(J_1(e_{ll}))\rho(J_2(e_{kk}))).
\end{align*}
Replacing $\rho(J_2(e_{kk}))$ by $e_{kk}\otimes 1_{N'}$, $\rho(J_1(e_{ll}))$ by $u^*(e_{ll}\otimes 1_{N'})u $ and $u^*$ by $\sum_{i,j=1}^n e_{ji}\otimes u^*_{ij}$, respectively, we deduce that
\begin{align*}
0&=\tau_{\Mn\otimes N'}\left(\rho(B)(e_{kk}\otimes 1_{N'})\sum_{i,j=1}^n (e_{ji}\otimes u_{ij}^*)(e_{ll}\otimes 1_{N'})u(e_{kk}\otimes 1_{N'})\right)\\
&=\sum_{i,j=1}^n\tau_{\Mn\otimes N'}\left(\rho(B)(e_{kk}e_{ji}e_{ll}\otimes u_{ij}^*)u(e_{kk}\otimes 1_{N'})\right)\\
&=\tau_{\Mn\otimes N'}\left(\rho(B)(e_{kl}\otimes u_{lk}^*)u(e_{kk}\otimes 1_{N'})\right).
\end{align*}
Next, replacing $u$ by $\sum_{i,j=1}^n e_{ij}\otimes u_{ij}$, we deduce that
\begin{align*}
0&=\tau_{\Mn\otimes N'}\left(\rho(B)(e_{kl}\otimes u_{lk}^*)\sum_{i,j=1}^n (e_{ij}\otimes u_{ij})(e_{kk}\otimes 1_{N'})\right)\\
&=\sum_{i,j=1}^n\tau_{\Mn\otimes N'}\left(\rho(B)(e_{kl}e_{ij}e_{kk}\otimes u_{lk}^*u_{ij})\right)\\
&=\tau_{\Mn\otimes N'}\left(\rho(B)(e_{kk}\otimes u_{lk}^*u^*_{lk})\right)\\
&=\tau_{\Mn\otimes N'}\left(\rho(B)(e_{kk}\otimes u_{lk}^*)(e_{kk}\otimes u_{lk})\right)\\
&=\tau_{\Mn\otimes N'}\left(\rho(B)^\frac{1}{2}(e_{kk}\otimes u_{lk}^*)(e_{kk}\otimes u_{lk})\rho(B)^\frac{1}{2}\right).
\end{align*}
Since $\rho(B)^\frac{1}{2}(e_{kk}\otimes u_{lk}^*)(e_{kk}\otimes u_{lk})\rho(B)^\frac{1}{2}$ is positive and the trace $\tau_{M_n\otimes N'}$ is faithful, 
we deduce that $\rho(B)^\frac{1}{2}(e_{kk}\otimes u_{lk}^*)(e_{kk}\otimes u_{lk})\rho(B)^\frac{1}{2}=0$ 
for all $k\neq l$. According to \eqref{rhoByrhoB=0 donc y=0}, this implies that 
$(e_{kk}\otimes u_{lk}^*)(e_{kk}\otimes u_{lk})=0$, 
hence 
$e_{kk}\otimes u_{lk}=0$. Finally for all  $1\leq k\neq l \leq n$, we have $u_{lk}=0$.
Therefore, the element $u$ has the form $\sum_{i=1}^ne_{ii}\otimes u_{ii}$. 
We set $u_i:=u_{ii}$, for $1\leq i\leq n$.
Then each $u_i$ is a unitary.

We will use the commutation property of $B$ and $J_1$. For all $x\in \Mn$, we have
$$
\rho(B)u^*(x\otimes 1_{N'})u=\rho(BJ_1(x))=\rho(J_1(x) B)=u^*(x\otimes 1_{N'})u\rho(B).
$$
Consider any $1\leq k,l\leq n$. We have
\begin{align*}
\rho(B)u^*(e_{kl}\otimes 1_{N'})u&=\rho(B)\left(\sum_{i=1}^ne_{ii}\otimes u_{i}^*\right)(e_{kl}\otimes 1_{N'})\left(\sum_{j=1}^ne_{jj}\otimes u_{j}\right)\\
&=\sum_{i,j=1}^n\rho(B)e_{ii}e_{kl}e_{jj}\otimes u_{i}^*u_{j}=\rho(B)e_{kl}\otimes u_{k}^*u_{l}\\
&=e_{kl}\otimes b_{11}u_{k}^*u_{l}.
\end{align*}
In the same way,
$$
u^*(x\otimes 1_{N'})u\rho(B)=e_{kl}\otimes u_{k}^*u_{l}b_{11}.
$$
We therefore deduce from above that
$$
u_{k}^*u_{l}b_{11}=b_{11}u_{k}^*u_{l},
\qquad 1\leq k,l\leq n.
$$
Thus for all $1\leq k,l\leq n$, we have $u_{l}b_{11}u_{l}^*=u_{k}b_{11}u_{k}^*$.
Hence $u_{k}b_{11}u_{k}^*
=u_1 b_{11}u_1^*$ for all $1\leq k\leq n$.

We will now relate $m_{ij}$, $b_{11}$ and $u_k$. 
For any $1\leq k,l\leq n$, we have
\begin{align*}
m_{kl}&=m_{kl}n\tau_n(e_{lk}e_{kl})=n\tau_n(e_{lk}T_M(e_{kl})).
\end{align*}
By \eqref{trace Tm en BJ2J1}, this implies
\begin{align*}
m_{kl}&=n\tau_N(BJ_2(e_{lk})J_1(e_{kl}))=n\tau_\M(BJ_2(e_{lk})J_1(e_{kl}))\\
&=n\tau_{\Mn\otimes N'}(\rho(B)\rho(J_2(e_{lk}))\rho(J_1(e_{kl})))\\
&=n\tau_{\Mn\otimes N'}(\rho(B)(e_{lk}\otimes 1_{N'})u^*(e_{kl}\otimes 1_{N'})u)\\
&=n\tau_{\Mn\otimes N'}\left(\rho(B)(e_{lk}\otimes 1_{N'})\left(\sum_{i=1}^n (e_{ii}\otimes u_i^*)\right)(e_{kl}\otimes 1_{N'})\left(\sum_{j=1}^n (e_{jj}\otimes u_j)\right)\right)\\
&=n\tau_{\Mn\otimes N'}(\rho(B)(e_{lk}\otimes 1_{N'})(e_{kl}\otimes u_k^* u_l))\\
&=n\tau_{\Mn\otimes N'}(\rho(B)(e_{ll}\otimes u_k^* u_l))\\
&=n\tau_{\Mn\otimes N'}(e_{ll}\otimes b_{11}u_k^* u_l))\\
&=n\tau_{ N'}(b_{11}u_k^* u_l)).\\
 \end{align*}

We set $b=nu_1b_{11}u_1^*$ and $v_i=u_iu_1^*$, for all $1\leq i\leq n$. By construction, $b\in L^1(N')$ and $b$ is positive. Moreover $\tau_{N'}(b)=1$. Furthermore, $v_1,\dots,v_n$ are unitaries. For any $1\leq i\leq n$, we have
$$
v_i^*bv_i=nu_1u_i^*u_1b_{11}u_1^*u_iu_1^*=
nu_1u_i^*u_ib_{11}u_i^*u_iu_1^*=nu_1b_{11}u_1^*=b.
$$
Thus, $b$ commutes with all the $v_i$. 
In addition, we obtain that for all $y\in N'$,
$$
b^\frac{1}{2}yb^\frac{1}{2}=0 \Rightarrow nu_1b_{11}^\frac{1}{2}u_1^*ynu_1b_{11}^\frac{1}{2}u_1^*=0 \Rightarrow b_{11}u_1^*yu_1b_{11}=0 \Rightarrow u_1^*yu_1=0 \Rightarrow y=0,
$$
by (\ref{rhoByrhoB=0 donc y=0}).
Finally for any 
$1\leq k,l\leq  n$, we have,
\begin{align*}
\tau_{N'}(bv_k^*v_l)=n\tau_{N'}(u_1b_{11}u_1^*u_1u_k^*u_lu_1^*)=n\tau_{N'}(b_{11}u_k^*u_l)=m_{kl}.
\end{align*}

Let us summarize the situation. 
We have obtained a tracial von Neumann algebra $(N',\tau')$, a positive $b\in L^1(N')$ with $\tau'(b)=1$, and unitaries 
$v_1,\dots,v_n\in N'$ such that 
$$
\forall\, 1\leq i,j\leq n,\ 
b v_i=v_i b \text{, }m_{ij}=\tau(b v_i^*v_j) 
\quad\text{ and }\quad 
\forall y\in N',\  b^\frac{1}{2}yb^\frac{1}{2}=0\Rightarrow y=0.
$$
Let $\Tilde{N}$ be the von Neumann algebra generated by $v_1,\ldots,v_n$ and let 
$\tau_{\Tilde{N}} : \Tilde{N}\to\mathbb C$
be defined by
$$
\tau_{\Tilde{N}}(x) = \tau'(bx).
$$
Then $\tau_{\Tilde{N}}$ is a normal state. 
Since $b$ commutes with all the $v_i$, it
commutes with all the elements of $\Tilde{N}$. 
Hence for all $x,y\in\Tilde{N}$,  we have
$$
\tau_{\Tilde{N}}(xy)=\tau'(bxy)=\tau'(xby)=\tau'(byx)=\tau_{\Tilde{N}}(yx).
$$
That is, $\tau_{\Tilde{N}}$ is a trace. Let $x\in \Tilde{N}_+$, such that $\tau_{\Tilde{N}}(x)=0$.
Then we have,
$$
0=\tau_{\Tilde{N}}(x)=\tau'(bx)=
\tau'(b^\frac{1}{2}xb^\frac{1}{2}).
$$ 
Since $\tau'$ is faithful, we obtain that 
$b^\frac{1}{2}xb^\frac{1}{2}=0$. By the 
above property of $b$, we deduce that $x=0$. Hence $\tau_{\Tilde{N}}$ is faithful. Thus 
$(\Tilde{N},\tau_{\Tilde{N}})$ is a 
tracial normalized von Neumann algebra. In addition  $(v_1,\dots,v_n)$ are unitaries of $\Tilde{N}$
and verify for all $1\leq i,j\leq n$,
$$
 m_{ij}=\tau(bv_i^*v_j)=\tau_{\Tilde{N}}(v_i^*v_j).
$$
This shows property 5 in Theorem \ref{th principal 2}.
\end{proof}

\begin{rmq}\label{rmq cas p=2 fini}
Consider $T_M : S^p_n\to S^p_n$ as in Theorem \ref{th principal 2} and 
for any $1\leq p<\infty$, let us say
that $T_M$ is completely positively $p$-dilatable if there exist
a tracial von Neumann algebra $(N,\tau)$, two completely positive and completely
contrative maps $J: S^p_n\to L^p(N)$ and $Q : L^p(N)\to S^p_n$, and an
invertible completely positive isometry $U:L^p(N)\to L^p(N)$ such that 
$T_M^k=QU^kJ$ for all $k\geq 0$. Then the five conditions of Theorem \ref{th principal 2} 
are also equivalent to:
\begin{itemize}
\item [1'.] there exists $1\leq p<\infty$ such that $T_M$ is completely positively $p$-dilatable.
\end{itemize}
Note that the case $p=2$ is admissible in this assertion. 

To prove this, it suffices to observe that if $T_M$ is completely positively $p$-dilatable
for some $1\leq p<\infty$, then (\ref{Product}) holds true for
some completely positive isometries $V\colon S^p_n\to L^p(N)$ and
$J'\colon S^q_n\to L^q(N)$ (here, $\frac{1}{p}+\frac{1}{q}=1$). Furthemore,
$V$ and $J'$ admit a Yeadon type factorization as
in (\ref{propriete de Yeadon commutent})-(\ref{propriete de Yeadon trace})-(\ref
{propriete de Yeadon ecriture})-(\ref{support}),
see \cite[Remark 5.2 and Theorem 4.2]{LMZl1contractive}. Therefore, 
the proof of Theorem \ref{th principal 2} shows as well that $1'\Rightarrow 5$.
Moreover the proof of Lemma \ref{lem absdila implique dila p} shows that $5\Rightarrow 1'$.
\end{rmq}

According to \cite{JLM},
there exists a completely positive contraction 
$u:S_n^p\to S_n^p$ which is not completely $p$-dilatable. 
The proofs of this result given in \cite{JLM}
do not provide any information on $n$. However we note that 
\cite[Example 3.2]{HaagerupMusat} provides a
unital completely positive Schur multiplier $T_M : M_4\to M_4$ 
which is not factorisable.
Applying Theorem \ref{th principal 2}, we  deduce that
for all $1<p\neq 2<+\infty$, $T_M:S^p_4\to S^p_4$
is not completely $p$-dilatable.

\section{Discrete Schur multipliers}\label{section Discrete Schur multipliers}

Let $I$ be an index set and let $\M_I$ denote 
the space of the $I\times I$ matrices with 
complex entries. We regard $B(l^2(I))\subset \M_I$ in the usual
way. Given any $M=(m_{ij})_{i,j\in I}\in\M_I$, 
the Schur multiplier on $B(l^2(I))$ associated with 
$M$ is the unbounded operator $T_M$ whose domain $D(T_M)$ is the space of all $A=(a_{ij})_{i,j\in I}$ 
in $B(l^2(I))$ such that $T_M(A) :=(m_{ij}a_{ij})_{i,j\in I}$ belongs to $B(l^2(I))$. 
If $D(T_M)=B(l^2(I))$, then $T_M: B(l^2(I))\to B(l^2(I))$ is a bounded Schur multiplier on $B(l^2(I))$.
In this case, for all $1\leq p<+\infty$, $T_M$ restricts to a bounded operator (still denoted by)
$T_M : S^p_I\to S^p_I$.

From now on, we assume that $T_M$ associated with $M\in \M_I$ is a bounded Schur multiplier 
on $B(l^2(I))$. We recall classical properties.
Since $T_M$ is positive (resp. completely positive) if and only if for all finite set $F\subset I$, $(T_M)_{|B(l^2(F))} : B(l^2(F))\to B(l^2(F))$ is positive (resp. completely positive), the following 
is a direct consequence of Theorem \ref{th lien matrice et multiplicateur}.

\begin{theorem}
\begin{enumerate}
\item []
\item $T_M$ is positive if and only if $T_M$ is completely positive if and only if  for all finite set $F\subset I$, the matrix $(m_{ij})_{i,j\in F}$ is positive semi-definite.
\item $T_M$ is unital if and only for all $i\in I$, $m_{ii}=1$.
\end{enumerate}
\end{theorem}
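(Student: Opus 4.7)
The plan is to reduce both equivalences to their finite-dimensional counterparts (Theorem \ref{th lien matrice et multiplicateur}) via the localization principle announced in the paragraph preceding the statement: positivity and complete positivity of $T_M$ are detected on the finite corners $B(l^2(F))$ for $F\subset I$ finite.

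For part (1), the content is to justify the localization principle itself. I would let $p_F \in B(l^2(I))$ denote the orthogonal projection onto $l^2(F)$. Directly from the matrix description, Schur multiplication commutes with corner compression:
\begin{equation*}
p_F T_M(A) p_F = (T_M)_{|B(l^2(F))}(p_F A p_F), \qquad A \in B(l^2(I)).
\end{equation*}
If each restriction $(T_M)_{|B(l^2(F))}$ is positive and $A\geq 0$, then $p_F A p_F \geq 0$ as a corner of a positive operator, hence $p_F T_M(A) p_F \geq 0$ for every finite $F$. Since $p_F$ converges to the identity in the strong operator topology along the net of finite subsets of $I$, one gets $p_F T_M(A) p_F \to T_M(A)$ strongly, and a strong-operator limit of positive operators is positive, so $T_M(A)\geq 0$. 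The converse is immediate since restriction of a positive map to a corner preserves positivity, and the same argument carried out at each matricial level handles complete positivity. Each restriction $(T_M)_{|B(l^2(F))}$ coincides with the finite Schur multiplier $T_{M_F}$ associated with $M_F := (m_{ij})_{i,j \in F}$, so applying Theorem \ref{th lien matrice et multiplicateur}(1) to each $F$ delivers the desired three-way equivalence.

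Part (2) is a direct computation: the identity of $B(l^2(I))$ has matrix $(\delta_{ij})_{i,j \in I}$, so $T_M(1)$ has matrix $(m_{ii}\delta_{ij})_{i,j \in I}$, which equals the identity if and only if $m_{ii}=1$ for every $i\in I$.

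No substantive obstacle appears: the entire argument rests on the elementary compatibility of Schur multiplication with corner compressions, together with the already established finite-dimensional Theorem \ref{th lien matrice et multiplicateur}.
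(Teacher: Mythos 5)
Your proposal is correct and follows the same route the paper takes: the paper simply asserts that positivity and complete positivity of $T_M$ are detected on the finite corners $B(l^2(F))$ and then invokes Theorem \ref{th lien matrice et multiplicateur}, exactly as you do. The only difference is that you spell out the localization step (compatibility with corner compressions, $p_F\to 1$ strongly, positivity passing to strong limits), which the paper leaves implicit; your details are accurate.
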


We will generalize Theorem \ref{th principal 2} to the present setting of
discrete Schur multipliers. In the following,  
we simply say $T_M$ is completely $p$-dilatable if 
$T_M : S^p_I\to S^p_I$ is completely $p$-dilatable.

\begin{theorem}\label{equivalence l2}
Let $T_M$ be a unital positive Schur multiplier. The
following assertions are equivalent:
\begin{enumerate}
\item there exists $1< p\neq 2 < +\infty$ such that $T_M$ is completely $p$-dilatable;
\item for all $1\leq p < +\infty$, $T_M$ is completely $p$-dilatable;
\item $T_M$ is absolutely dilatable;
\item there exist a tracial normalised von Neumann algebra $(N,\tau)$ and 
a family $(v_i)_{i\in I}$ of unitaries of $N$ 
such that for all $ i,j\in I$:
$$
m_{ij}=\tau(v_i^*v_j).
$$
\end{enumerate}
\end{theorem}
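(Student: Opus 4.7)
The plan is as follows. The implications $3 \Rightarrow 2 \Rightarrow 1$ are immediate from Lemma \ref{lem absdila implique dila p}, so the work lies in $1 \Rightarrow 4$, which I would reduce to the finite case of Theorem \ref{th principal 2} by compression and an ultraproduct, and in $4 \Rightarrow 3$, which I would establish by an explicit Bernoulli-shift dilation.

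For $1 \Rightarrow 4$, the first step is to localise to finite subsets of $I$. For each finite $F \subset I$, write $p_F = \sum_{i \in F} e_{ii}$, let $\iota_F : S^p_F \hookrightarrow S^p_I$ be the natural inclusion and $P_F : S^p_I \to S^p_F$, $x \mapsto p_F x p_F$, the compression; both are complete contractions. Schur multipliers commute with this block decomposition, so $T_M^k \iota_F = \iota_F T_{M_F}^k$ and $P_F \iota_F = \mathrm{id}_{S^p_F}$, hence any complete $p$-dilation $T_M^k = Q U^k J$ of $T_M$ restricts to $T_{M_F}^k = (P_F Q) U^k (J \iota_F)$. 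By Theorem \ref{th principal 2} (equivalence $1 \Leftrightarrow 5$), there exist a tracial normalised von Neumann algebra $(N_F, \tau_F)$ and unitaries $(v_i^F)_{i \in F} \subset N_F$ with $\tau_F((v_i^F)^* v_j^F) = m_{ij}$ for $i, j \in F$. Now fix an ultrafilter $\U$ on the directed set of finite subsets of $I$ that refines the order filter, and form the tracial ultraproduct $(N, \tau) = \prod_{\U} (N_F, \tau_F)$, which is again a tracial normalised von Neumann algebra. For each $i \in I$, let $v_i \in N$ be the class of the family $(v_i^F)_F$ (with $v_i^F := 1$ when $i \notin F$); each $v_i$ is unitary, and cofinality of $\U$ gives $\tau(v_i^* v_j) = \lim_{\U} \tau_F((v_i^F)^* v_j^F) = m_{ij}$.

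For $4 \Rightarrow 3$, suppose $(v_i)_{i \in I}$ are unitaries in a tracial normalised $(N, \tau_N)$ with $m_{ij} = \tau_N(v_i^* v_j)$. Form the infinite tensor product $\widetilde N = \bigotimes_{k \in \Z} (N, \tau_N)$ with its normalised trace $\widetilde\tau$ and the Bernoulli shift $\star$-automorphism $s$. Equip $N' := B(l^2(I)) \otimes \widetilde N$ with $\tau' := \tr \otimes \widetilde\tau$ and let $J : B(l^2(I)) \to N'$ be $x \mapsto x \otimes 1$, a unital trace-preserving $w^*$-continuous $\star$-homomorphism whose associated conditional expectation is $\E = \mathrm{id} \otimes \widetilde\tau$. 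Writing $v_i^{(k)}$ for $v_i$ placed in the $k$-th tensor factor of $\widetilde N$, set $V = \sum_{i \in I} e_{ii} \otimes v_i^{(0)}$ (a unitary in $N'$), and define the trace-preserving $\star$-automorphism $U(y) = V^* (\mathrm{id} \otimes s)(y) V$. A short induction yields $U^k J(x) = W_k^* (x \otimes 1) W_k$ with $W_k = s^{k-1}(V) \cdots s(V) V = \sum_i e_{ii} \otimes v_i^{(k-1)} \cdots v_i^{(0)}$. Since factors at distinct positions of $\widetilde N$ commute, $\widetilde\tau$ factorises and
\[
\widetilde\tau\bigl(v_i^{(0)*} v_i^{(1)*} \cdots v_i^{(k-1)*} v_j^{(k-1)} \cdots v_j^{(0)}\bigr) = \prod_{l=0}^{k-1} \tau_N(v_i^* v_j) = m_{ij}^k,
\]
so $\E(U^k J(x)) = T_M^k(x)$ for every $k \geq 0$, which is (3).

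The principal obstacle is the ultraproduct step in $1 \Rightarrow 4$: one must produce a single tracial normalised von Neumann algebra in which the whole family $(v_i)_{i \in I}$ lives and the identities $\tau(v_i^* v_j) = m_{ij}$ hold simultaneously, rather than merely for each finite subset. The direction $4 \Rightarrow 3$ is by comparison essentially mechanical, once one recognises that a Bernoulli shift is exactly the device needed to produce an independent copy of the defining unitaries at each iteration, forcing $\widetilde\tau$ to return $m_{ij}^k$ instead of an unrelated quantity such as $\tau_N((v_i^k)^* v_j^k)$.
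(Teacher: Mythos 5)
Your argument is correct. For the main implication $1\Rightarrow 4$ you follow essentially the same route as the paper: compress the dilation to each finite block $S^p_F$ (your $T_{M_F}^k=(P_FQ)U^k(J\iota_F)$ is exactly the paper's $T_{M,F}^k=(J'_F)^*U^kJ_F$ with $J'=Q^*$), invoke Theorem \ref{th principal 2} to get $(N_F,\tau_F)$ and unitaries $(v_i^F)$, and pass to the tracial ultraproduct along an ultrafilter refining the order filter on finite subsets; your explicit remark that cofinality is what forces $\lim_\U\tau_F((v_i^F)^*v_j^F)=m_{ij}$ is a point the paper leaves implicit. The one place you genuinely diverge is $4\Rightarrow 3$: the paper simply cites \cite{A1} and \cite{DL} for this implication, whereas you prove it directly with the Bernoulli-shift construction $U(y)=V^*(\mathrm{id}\otimes s)(y)V$ on $B(l^2(I))\overline{\otimes}\bigotimes_{\Z}N$ and the computation $\E(U^kJ(x))=T_M^k(x)$ via independence of the tensor factors. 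This is in fact the same construction the paper deploys later for the multivariable Theorem \ref{th principal multivariables}, so your proof makes the theorem self-contained at no extra conceptual cost; the computation of $W_k$ and the factorisation of $\widetilde\tau$ over distinct tensor positions are both correct.
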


\begin{proof}
The implications $3\Rightarrow 2 \Rightarrow 1$  are clear. 
The implication $4 \Rightarrow 3$ is proved implicitly in \cite[Proof of Theorem 4.2]{A1}, see also \cite{DL}. It remains to show $1\Rightarrow 4$.

Let $1<p\neq 2<+\infty$
and suppose that $T_M:S_I^p\to S_I^p$ is a completely $p$-dilatable. 
There exist a tracial von Neumann algebra $(\M, \tau_\M)$, 
two complete contractions $J:S^p_I\to L^p(\M)$
and $Q: L^p(\M)\to S_I^p$, as well as 
an inverible complete isometry $U$ on
$L^p(\M)$, such that for all $k\geq 0$, $T^k=QU^kJ$. 
Let $1<q<\infty$ such that $\frac{1}{p}+\frac{1}{q}=1$ and set
$J':=Q^*: S_I^q\to L^q(\M)$. By the equality $T^k=QU^kJ$ for 
$k=0$, we see (as in the proof of Theorem 
\ref{th principal 2}) that
$J$ and $J'$ are two complete isometries.

For any finite subset $F\subset I$, we 
let $T_{M, F}: S_ F^p\to S_ F^p$ 
denote the restriction of $T_M$, that is,
$T_{M, F}((a_{ij})_{i,j\in  F})=(m_{ij}a_{ij})_{i,j\in  F}$. 
We consider $J_ F=J_{|S_ F^p}$ and $J'_ F=J'_{|S_ F^q}$. It is clear that on 
$S_ F^p$, we have $T_{M, F}^k=(J'_F)^*U^kJ_ F$ for all $k\geq 0$. 
Hence $T_{M, F}$ is completely $p$-dilatable.

In the finite dimensional case, we have Theorem \ref{th principal 2} at our disposal. 
We use it and we obtain that there exist a von Neumann algebra $\M_ F$ 
equipped with a normal faithful normalized trace $\tau_{\M_ F}$ and 
unitaries $(d_{i, F})_{i\in F}$ of $\M_ F$ such that for all $i,j\in F$,
$$
m_{ij}=\tau_{\M_ F}(d^*_{i, F}d_{j, F}).
$$

We recall the following ultraproduct construction, 
see \cite[Section 11.5]{Pisiertensor} for details. 
Let $\F$ be a non trivial
ultrafilter on the index set $\left\lbrace F\subset I \text{ finite}\right\rbrace$.
Let $B$ be the C$^*$-algebra defined by 
$$
B=\left\lbrace x=(x_F)\in \prod\limits_{ F\subset I\text{ finite }}\M_ F\text{ }|\sup\limits_{ F\subset I\text{ finite}}\|x_ F\|_{{\mathcal M}_F}<+\infty\right\rbrace.
$$
Let $f_\F\in B^*$ be the tracial state defined 
for all $x=(x_F)\in B$ by
$$
f_\F(x)=\lim_\F \tau_{\M_F}(x_F).
$$
Let $H_\F$ be the Hilbert space associated with the tracial state $f_\F$ in the GNS construction. Let $L:B\to B(H_\F)$ be the $\star$-homomorphism induced by left multiplication. 
Consider the ideal $\I_\F$ of $B$ defined by
$$
\I_\F:=\left\lbrace x\in B\text{ }| f_\F(x^*x)=0\right\rbrace.
$$
Since $f_\F$ is a trace, we have $\I_\F=\ker(L)$. Let 
$B/ \I_\F$ denote the resulting
quotient C$^*$-algebra and let 
$q:B\to B/ \I_\F$ be the quotient map. 
Then  we have a one-to-one $\star$-homomorphism 
$L_\F: B/\I_\F\to B(H_\F)$ and a 
faithful normalized trace $\tau_\F: B/\I_\F\to\mathbb C$ such that
$$
L_\F\circ q=L\quad \text{ and }\quad \tau_\F \circ q=f_\F.
$$
A remarkable result is that 
$$
M_\F:=L_\F(B/\I_\F)\subset B(H_\F)
$$
is
a von Neumann algebra and $\rho_\F:=\tau_\F\circ L_\F^{-1}
: M_\F\to\mathbb C$ is normal. Thus, 
$(M_\F,\rho_\F)$ is a tracial normalised von Neumann algebra.

For  all $i\in I$, we define
$d_i:=L_\F(q( (d_{i,F})_{F\subset I\text{ finite}}))=L( (d_{i,F})_{F\subset I\text{ finite}})$. The 
$d_i$ are well-defined because all $d_{i,F}$ are unitaries, hence
$\|d_{i,F}\|=1$. Since $L$ is unital, $d_i$ 
is a unitary for all $i\in I$.
It remains to prove the formula $m_{ij}=\rho_\F( d_i^*d_j)$. 
Given any $i,j\in I$, we have
$$
d_i^*d_j = L_\F\left(q\left( (d_{i,F}^*d_{j,F})_F \right)\right),
$$
hence
\begin{align*}
\rho_\F(d_i^*d_j)&=
\tau_\F\left(q\left( (d_{i,F}^*d_{j,F})_F \right)\right)\\
&=f_\F\left( (d_{i,F}^*d_{j,F})_F\right)\\
&=\lim_\F \tau_{\M_F}\left( d_{i,F}^* d_{j,F}\right)\\
&= m_{ij}.
\end{align*}
\end{proof}

The equivalence $3 \Leftrightarrow 4$ in Theorem
\ref{equivalence l2} provides a new proof
of \cite[Corollary 7.2]{DL}. We refer to the latter paper 
for similar results in the non discrete case.

\begin{rmq}
Following \cite[page 4367]{eric}, consider
the normal faithful state on $B(l^2)$
with density equal to
the diagonal operator $D=\sum\limits_{i\geq 1} \lambda_i 
e_i\otimes e_i$, where $(e_i)_{i\geq 1}$ is the
canonical basis of $l^2$, $\lambda_i>0$ 
for all $i\geq 1$ 
and $\sum\limits_{i\geq 1} \lambda_i=1$. 

We say that $T_M$ is $(D,D)$-factorisable if $T_M$ is 
factorisable in the sense of \cite[Definition 1.3]{HaagerupMusat}. 
According to \cite[Theorem 4.4]{HaagerupMusat}, the equivalent
conditions of Theorem \ref{equivalence l2} in the case
$I=\mathbb N$ are 
also equivalent to ``$T_M$ is $(D,D)$-factorisable".
\end{rmq}

\begin{rmq} As in Remark \ref{rmq cas p=2 fini}, 
we can observe that the equivalent
conditions of Theorem \ref{equivalence l2} are 
also equivalent to:
\begin{itemize}
\item [1'] there exists $1\leq p<\infty$ such that $T_M$ is completely positively $p$-dilatable.
\end{itemize}

\end{rmq}

\section{Multivariable case}\label{section Multivariable case}
In this last section, we introduce the notion of simultaneous absolute dilation for a commuting finite family of operators and consider the special case of Schur multipliers. Note
that all bounded Schur multipliers 
are commuting.

\begin{definition}

\    
\begin{itemize}
\item [(a)]
Let $(N,\tau)$ be a tracial von Neumann algebra.
We say that a commuting family $(T_1,\ldots,T_n)$ of 
operators 
$T_l:(N,\tau)\to (N,\tau)$ 
is simultaneously absolutely dilatable if there exist a tracial von Neumann algebra $(N',\tau')$, 
a commuting $n$-tuple $(U_1,\dots,U_n)$ of trace preserving $\star$-automorphisms on $N'$ and a unital 
one-to-one trace preserving and
$w^*$-continuous $\star$-homomorphism $J: N\to N'$ such that 
\begin{align*}
T_1^{k_1}\cdots T_n^{k_n}=
\mathbb{E}U_1^{k_1}\cdots U_n^{k_n}J
\end{align*}
for all $k_l\geq 0$, $1\leq l\leq n$, where 
$\mathbb{E}:N'\to N$ is  the conditional 
expectation associated with $J$.

\item[(b)]
Let $1\leq p <+\infty$ and 
let $(N,\tau)$ be a tracial von Neumann algebra. We
say that a commuting family $(T_1,\ldots,T_n)$ of 
operators $T_l:L^p(N)\to L^p(N)$
is simultaneously completely $p$-dilatable, if there exist a tracial von Neumann algebra $(N',\tau)$, a
commuting $n$-tuple $(U_1,\dots,U_n)$ of 
invertible complete isometries on $L^p(N')$
and two complete contractions $J: L^p(N)\to L^p(N')$ and $Q:L^p(N')\to L^p(N)$ such that 
\begin{align*}
T_1^{k_1}\cdots T_n^{k_n}=
QU_1^{k_1}\cdots U_n^{k_n}J
\qquad \hbox{on}\ L^p(N)
\end{align*}
for all $k_l\geq 0$, $1\leq l\leq n$.
\end{itemize}
\end{definition}

As in the case of one operator (see Lemma \ref{lem absdila implique dila p}), we see
that a simultaneously absolutely dilatable family is simultaneously completely $p$-dilatable.

\begin{theorem}\label{th principal multivariables} 
Let $I$ be an index set and for any
$1\leq l \leq n$, let $M^l=(m_{ij}^l)_{i,j\in I}\in \M_I$  
such that $T_{M_l}\colon B(l^2(I))\to B(l^2(I))$ is a 
unital positive bounded Schur multiplier.
The following assertions are equivalent:
\begin{enumerate}
\item the family $(T_{M^l})_{l=1}^n$ is simultaneously absolutely dilatable;
\item each $T_{M^l}$ is absolutely dilatable.
\end{enumerate}
\end{theorem}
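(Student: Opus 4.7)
The implication $(1)\Rightarrow(2)$ is immediate: in the defining simultaneous dilation identity, setting $k_{l'}=0$ for all $l'\neq l$ yields $T_{M^l}^{k_l}=\E U_l^{k_l}J$, which is exactly the defining relation of absolute dilatability for the single operator $T_{M^l}$.

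For $(2)\Rightarrow(1)$, I would first apply Theorem~\ref{equivalence l2} to each $T_{M^l}$ to produce, for every $1\leq l\leq n$, a tracial normalised von Neumann algebra $(N_l,\tau_l)$ together with a family $(v_i^l)_{i\in I}$ of unitaries of $N_l$ satisfying $m_{ij}^l=\tau_l((v_i^l)^*v_j^l)$ for all $i,j\in I$. The task then becomes to package these $n$ separate unitary families into one common ambient algebra carrying $n$ pairwise commuting trace preserving $*$-automorphisms that simultaneously implement all the $T_{M^l}$.

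My construction uses $n$ independent Bernoulli shifts. For each $l$, let $\widetilde{N}_l$ denote the von Neumann tensor product $\bigotimes_{k\in\Z}(N_l,\tau_l)$, with coordinate embeddings $\pi_l^{(k)}\colon N_l\to\widetilde{N}_l$ and associated shift $*$-automorphism $S_l$. Set $\widetilde{N}=\widetilde{N}_1\otimes\cdots\otimes\widetilde{N}_n$ (von Neumann tensor product) with product trace $\widetilde{\tau}$, and extend each $S_l$ trivially to $\widetilde{N}$. Take $N'=B(l^2(I))\otimes\widetilde{N}$ equipped with the n.s.f.\ trace $\mathrm{tr}\otimes\widetilde{\tau}$, and let $J\colon B(l^2(I))\to N'$ be the canonical inclusion $a\mapsto a\otimes 1$, with associated conditional expectation $\E=\mathrm{id}\otimes\widetilde{\tau}$. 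For each $l$, define the diagonal unitary
\[
V_l=\sum_{i\in I}e_{ii}\otimes \pi_l^{(0)}(v_i^l)\in N'
\]
and set $U_l=S_l\circ\mathrm{Ad}(V_l^*)$. Because $V_l$ and $S_l$ act nontrivially only on the $l$-th tensor factor $\widetilde{N}_l$, the $U_l$'s pairwise commute and each one is a trace preserving $*$-automorphism of $N'$.

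Finally, to verify $T_{M^1}^{k_1}\cdots T_{M^n}^{k_n}=\E U_1^{k_1}\cdots U_n^{k_n}J$, I would compute on a matrix unit $e_{ij}$: iterating the single-variable Bernoulli dilation (essentially the one implicit in the proof of $4\Rightarrow 3$ of Theorem~\ref{equivalence l2}) gives
\[
U_1^{k_1}\cdots U_n^{k_n}(e_{ij}\otimes 1)=e_{ij}\otimes\prod_{l=1}^n\prod_{s=1}^{k_l}\pi_l^{(s)}\bigl((v_i^l)^*v_j^l\bigr),
\]
and applying $\E$ together with the product nature of $\widetilde{\tau}$ yields $\prod_l\tau_l((v_i^l)^*v_j^l)^{k_l}\,e_{ij}=\prod_l(m_{ij}^l)^{k_l}\,e_{ij}=T_{M^1}^{k_1}\cdots T_{M^n}^{k_n}(e_{ij})$. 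I expect the conceptual crux of the argument to be precisely this decoupling of shifts into independent tensor factors, which ensures that the $U_l$ genuinely commute while each one reproduces a Bernoulli-type dilation of its own Schur multiplier; once that arrangement is set up, the remaining verifications (unitarity of $V_l$ when $I$ is infinite, commutation of the $U_l$, and the trace computation above) are routine.
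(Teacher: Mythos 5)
Your argument is correct and follows the same overall strategy as the paper: first reduce, via Theorem~\ref{equivalence l2}, to the representation $m^l_{ij}=\tau_l((v^l_i)^*v^l_j)$ by unitaries, and then build an explicit Bernoulli-shift dilation on $B(l^2(I))\overline{\otimes}(\text{infinite tensor product})$ with $J=(\,\cdot\,)\otimes 1$ and $\E=\mathrm{id}\otimes\widetilde{\tau}$. The one genuine difference is how the commuting automorphisms are arranged. The paper first forms $N=N^1\overline{\otimes}\cdots\overline{\otimes}N^n$, takes a \emph{single} $\Z$-indexed tensor power $N^\infty=\overline{\otimes}_\Z N$ with one common shift $S$, and defines $U_l=\mathrm{Ad}((u^l)^*)\circ(\mathrm{id}\otimes S)$ where $u^l$ sits in the $l$-th sub-factor of the $0$-th position; you instead take $n$ \emph{independent} $\Z$-indexed tensor powers $\widetilde{N}_l=\overline{\otimes}_\Z N_l$, each with its own shift $S_l$ extended trivially to the others. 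Your decoupling is the cleaner arrangement: since $S_l$ fixes $V_{l'}$ for $l'\neq l$ and the diagonal unitaries $V_l$ pairwise commute, one gets $U_lU_{l'}=S_lS_{l'}\circ\mathrm{Ad}(V_{l'}^*V_l^*)=U_{l'}U_l$ on the nose, whereas with a single common shift the compositions $U_lU_{l'}$ and $U_{l'}U_l$ record the two ``histories'' in swapped tensor positions of $N^\infty$, so the asserted commutativity of the paper's $U_l$ is not automatic and needs the kind of rearrangement your construction builds in from the start. Both versions give the same identity $\E U_1^{k_1}\cdots U_n^{k_n}J(e_{ij})=\prod_l (m^l_{ij})^{k_l}e_{ij}$ after applying the conditional expectation (extended to all of $B(l^2(I))$ by normality), so your proof is complete and, on the commutation point, arguably more careful than the paper's.
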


\begin{proof}
Implication $1\Rightarrow 2$ is clear. Conversely, we suppose that each $T_{M^l}$ is absolutely dilatable and we use Theorem \ref{equivalence l2}. Thus for all $1\leq l\leq n$, there exist  a tracial von Neumann algebra $(N^l,\tau^l)$,  unitaries of $N$ $(u^l_j)$ such that for all $i,j\in I$:
$$
m^l_{ij}=\tau^l\left( \left(u^l_i\right)^*u^l_j\right).
$$

We denote the infinite von Neumann tensor product $\overline{\otimes}_\Z N$ by $N^\infty$ and we 
let $\tau^\infty$ denote the normal faithful finite trace on $N^\infty$ 
(see \cite{Takesaki3}). We denote the unit of $N^\infty$ by $1_\infty$.

We let $(N,\tau_N)=(N^1\overline{\otimes}\cdots \overline{\otimes}N^n,\tau^1\overline{\otimes}\cdots\overline{\otimes} \tau^n)$ and  $N'=B(l^2(I))\overline{\otimes} N^\infty$. We consider the standard n.s.f trace $\tau_{N'}:=\tr_I\overline{\otimes} \tau_N^\infty$ on $N'$. We set for all $j\in I$, the map $e_{jj}: l^2(I)\to  l^2(I)$ such that for all $x=(x_i)_{i\in I}\in l^2(I)$, $e_{jj}(x)=(\dots,0,x_j,0,\dots)$ where $x_j$ is in the $j$ position.

We let for all $1\leq l \leq n$,
\begin{align*}
    \begin{array}{ccc}
u^l =\sum_{j\in I} e_{jj}\otimes\cdots1_N\otimes 1_N\otimes &\underbrace{1_{N^1}\otimes \cdots\otimes 1_{N^{l-1}} \otimes u^l_j\otimes  1_{N^{l+1}}\otimes \cdots \otimes 1_{N^n}}&\otimes 1_N\cdots\\
&0&
\end{array}
\end{align*}
where the summation is taken in $w^*$-topology of $N'$.

The element $u^l$ is unitary because all $u^l_j$ are unitaries. We introduce the right shift $S : N^\infty\to N^\infty$. This is
a normal, trace preserving $\star$-automorphism such that 
for all $(x_n)_{n\in \Z}\subset N$,
\begin{align*}
S(\cdots\otimes x_0\otimes x_1\otimes x_2\otimes \cdots)=\cdots\otimes x_{-1}\otimes x_0\otimes x_1\otimes \cdots.
\end{align*} we let the following mappings:
\begin{align*}
&J:B(l^2(I))\to N' \quad x\mapsto x\otimes 1_{\infty}\\
&U_l:N'\to N'\quad y\mapsto (u^l)^*((Id\otimes S)(y))u^l.
\end{align*}

We remark that $\E =id\otimes \tau_N^\infty$ is the normal faithful canonical conditional 
expectation associated with $J$ and it preserves the trace. In addition $J$ is a $\star$-homomorphism which is trace preserving and the  $U_l$ are $\star$-isomorphisms which are trace preserving. All $U_l$  are commuting to each other. With the same computation in \cite[Proof of Theorem 1.9]{SKR}, we obtain $T_{M_1}^{k_1}\cdots T_{M_n}^{k_n}=\mathbb{E}U_1^{k_1}\cdots U_n^{k_n}J$, for all $k_l\in \N_0$, $1\leq l\leq n$.
\end{proof}

\vskip 1cm

\begin{large}
Acknowledgments:\end{large} I would like to thank Christian Le Merdy, my thesis supervisor for all his support and his help.
The LmB receives support from
the EIPHI Graduate School (contract ANR-17-EURE-0002)
and the author was supported by the ANR project Noncommutative analysis on groups and quantum groups (No./ANR-19-CE40-0002).

\vskip 1cm
\bibliographystyle{abbrv}

\vspace{0.8cm}

\end{document}